\titlespacing*{\section}{0pt}{14pt}{4pt}
\titlespacing*{\subsection}{0pt}{8pt}{3pt}
\patchcmd{\ttlh@hang}{\parindent\z@}{\parindent\z@\leavevmode}{}{}
\patchcmd{\ttlh@hang}{\noindent}{}{}{}
\newcommand\numberthis{\addtocounter{equation}{1}\tag{\theequation}}
\newtheorem{theorem}{Theorem}[section]
\newtheorem{lemma}[theorem]{Lemma}
\newtheorem{proposition}[theorem]{Proposition}
\newtheorem{corollary}[theorem]{Corollary}
\theoremstyle{definition}
\newtheorem{definition}[theorem]{Definition}
\newenvironment{example}
  {\pushQED{\qed}\examplex}
  {\popQED\endexamplex}
\theoremstyle{remark}
\newtheorem{remark}[theorem]{Remark}
\numberwithin{equation}{section}
\newcommand\blfootnote[1]{%
  \begingroup
  \renewcommand\thefootnote{}\footnote{#1}%
  \addtocounter{footnote}{-1}%
  \endgroup
}
\DeclareMathOperator*{\covolh}{d_{H}}
\DeclareMathOperator*{\vol}{vol}
\DeclareMathOperator*{\loc}{loc}
\DeclareMathOperator*{\covol}{d}
\DeclareMathOperator*{\rel}{Rel}
\DeclareMathOperator*{\supp}{supp}
\DeclareMathOperator*{\diag}{diag}
\DeclareMathOperator*{\essinf}{ess\,inf}
\begin{document}

\author{Jordy Timo van Velthoven}
\title{On the local integrability condition for generalised translation-invariant systems}
\date{}

\maketitle

 \blfootnote{2010 {\it Mathematics Subject Classification.} Primary:
    42C40, 43A32, Secondary: 42C15, 43A70.}
 \blfootnote{{\it Keywords and phrases.}
  Calder\'on integrability condition, frames, generalised translation-invariant systems, local integrability condition, uniform counting estimate.}

\begin{abstract}
This paper considers the local integrability condition for generalised translation-invariant systems and its relation to the Calder\'on integrability condition, the temperateness condition and the uniform counting estimate. It is shown that sufficient and necessary conditions for satisfying the local integrability condition are closely related to lower and upper bounds on the number of lattice points that intersect with the translates of a compact set. The results are complemented by examples that illustrate the crucial interplay between the translation subgroups and the generating functions of the system.
\end{abstract}

\section{Introduction} \label{sec: intro}
Generalised shift-invariant systems form a large class of structured function systems. In the setting of a locally compact Abelian group $G$, given a countable collection $ \{g_{j} \}_{j \in J}$ of functions $ g_{j}  \in L^2 (G)$ and a collection $(\Gamma_j)_{j \in J}$ of discrete, co-compact subgroups $\Gamma_j \subseteq G$, a \emph{generalised shift-invariant system} in $L^2 (G)$ is a system of the form
\[ \bigcup_{j \in J} \bigg\{g_{j} (\cdot - \gamma) \; : \; \gamma \in  \Gamma_j  \bigg\}. \]
A generalised shift-invariant system $\cup_{j \in J} \{g_{j}(\cdot - \gamma) \}_{\gamma \in \Gamma_j}$ in $L^2 (G)$ is called a \emph{generalised shift-invariant frame}, or simply a \emph{frame}, for $L^2 (G)$ if there exist constants $A, B > 0$ such that 
\[
A \| f \|_2^2 \leq \sum_{j \in J} \sum_{\gamma \in \Gamma_j} |\langle f, g_{j} (\cdot - \gamma) \rangle |^2  \;  \leq B \| f \|^2_2
\]
for all $f \in L^2 (G)$. Given a frame $\cup_{j \in J} \{g_j (\cdot - \gamma) \}_{\gamma \in \Gamma_j}$ for $L^2 (G)$, any function $f \in L^2 (G)$ can be represented by an unconditionally, norm convergent series of the form $f = \sum_{j \in J} \sum_{\gamma \in \Gamma_j} c_{j, \gamma}  g_j(\cdot - \gamma)$ for some coefficients $(c_{j, \gamma})_{j \in J, \gamma \in \Gamma_j}$. 

The frame properties of arbitrary generalised shift-invariant systems in $L^2 (\mathbb{R}^d)$ have first been investigated by Hern\'andez, Labate \& Weiss \cite{hernandez2002unified} and Ron \& Shen \cite{ron2005generalized}. Afterwards, generalised shift-invariant frames have been the topic  of numerous papers including \cite{jakobsen2016reproducing,christensen2005generalized,christensen2017explicit,kutyniok2006theory,fuhr2017system,lemvig2017sufficient}. In studying the frame properties of a generalised shift-invariant system, it is usually assumed that the system $\cup_{j \in J} \{g_{j}(\cdot - \gamma) \}_{\gamma \in \Gamma_j}$ satisfies, for any compact set $K$ in the Fourier domain $\widehat{G}$, 
\begin{align} \label{eq:LIC_intro}
\sum_{j \in J} \frac{1}{\vol (G / \Gamma_j)} \sum_{\alpha \in \Gamma_j^{\perp}} \int_{K \cap  \alpha^{-1} K} | \hat{g}_{j} (\omega) |^2 \; d\mu_{\widehat{G}} (\omega) < \infty.
\end{align}
The condition \eqref{eq:LIC_intro} is known as the \emph{local integrability condition} (LIC). In essence, the local integrability condition constraints the interaction between the translation subgroups and the generating functions of the system. The reason for imposing the local integrability condition is twofold. On the one hand, the local integrability condition guarantees the almost periodicity of an auxiliary function that is useful in studying frame properties of a generalised shift-invariant system, see \cite{laugesen2001completeness,laugesen2002translational,balan2014multi} for approaches based on similar techniques. Here, it should be noted that there are weaker conditions than the LIC that still guarantee this almost periodicity, namely the so-called \emph{$\infty$-unconditional convergence property} \cite{fuhr2017system} and the \emph{$\alpha$-local integrability condition} \cite{jakobsen2016reproducing}. Another reason for imposing the local integrability condition, or a similar condition, is that without such a condition structural results that substantiate intuition might fail. For example, a characterisation of generalised shift-invariant Parseval frames \cite{bownik2004spectral} and necessary conditions involving the Calder\'on sum or the system bandwidth \cite{fuhr2017system} fail without assuming the local integrability condition or a similar condition.

Verifying the local integrability condition is usually a non-trivial task and might even be an obstacle. However, in special cases simple sufficient conditions for the local integrability condition are known. For example, a simple characterisation of the local integrability condition for arbitrary generalised shift-invariant systems on the real line and on locally compact Abelian groups possessing a compact connected component were given in \cite{christensen2017explicit} and \cite{kutyniok2006theory}, respectively. For special cases of generalised translation-invariant systems such as wavelet systems and wave packet systems simple sufficient conditions or characterisations are contained in \cite{bownik2011affine,kutyniok2006local} and \cite{labate2004approach}, respectively.

In this paper the local integrability condition is considered for arbitrary generalised shift-invariant systems on locally compact Abelian groups. The considered systems will be more general than the ones mentioned above. Following \cite{iverson2015subspaces,jakobsen2016reproducing}, the index sets for the generating functions are namely allowed to be uncountable. Moreover, the translation subgroups of the system are allowed to be non-discrete as in \cite{bownik2015structure,jakobsen2016reproducing, barbieri2015zak}. To stress that the involved subgroups need not be discrete the term \emph{translation-invariant}, rather than shift-invariant, is adopted. The results focus on the subtle interplay between the local integrability condition and three related conditions involving the generating functions and the translation subgroups, namely the Calder\'on integrability condition, the temperateness condition and the uniform counting estimate; see Section 2.2 below for precise definitions. These conditions have appeared implicitly or explicitly in the literature before, but their interrelation and their relation to the local integrability condition has not been systematically addressed. The results in this paper include simple sufficient and necessary conditions for the local integrability condition that involve the Calder\'on integrability condition, the temperateness condition and the uniform counting estimate. Indeed, as special cases, the aforementioned characterisations of the local integrability condition obtained in the papers \cite{christensen2017explicit,kutyniok2006theory} are recovered. To be more precise, the characterisation of the local integrability condition on locally compact Abelian groups with a compact connected component given in \cite{kutyniok2006theory} is extended to generalised translation-invariant systems for which the translation subgroups are not necessarily discrete. A higher-dimensional analogue of the characterisation of the local integrability condition on the real line \cite{christensen2017explicit} is also obtained. Aside these results, the paper includes several examples illustrating the subtle interplay between the translation subgroups and the generating functions. Noteworthy is here especially Example \ref{ex:mainexample}, which demonstrates that the temperateness condition is in general not necessary for the local integrability condition to be satisfied. This contrasts sharply with the situation on some specific groups or for large classes of generalised translation-invariant systems. 

The paper is organised as follows. Generalised translation-invariant systems and associated technical conditions are introduced in Section 2. The subtle interplay between these conditions forms the topic of Section 3. In this section sufficient and necessary conditions for the local integrability condition are given. Section 4 considers the local integrability condition for the cases in which the underlying group possesses a compact open subgroup or is a Euclidean space. 

\section{Generalised translation-invariant systems and associated conditions}
In this section the key notions of the paper are introduced. The first subsection sets up the general notation. 
Generalised translation-invariant systems and frames are introduced in Section \ref{sec:GTI_LCA}. Several technical conditions associated with generalised translation-invariant systems are considered in Section \ref{sec:LIC_counting}. 

\subsection{Notation and normalisations}
Let $G$ denote a locally compact Abelian group. It will be assumed that $G$ is Hausdorff and satisfies the second axiom of countability, which is equivalent to $G$ being metrisable and $\sigma$-compact. The Pontryagin dual  of $G$ will be denoted by $\widehat{G}$ and forms a second countable locally compact Abelian group itself. The group operations in $G$ and $\widehat{G}$ are written additively $+$ and multiplicatively $\cdot$, respectively. Correspondingly, the elements $0$ and $1$ denote the identity elements in $G$ respectively $\widehat{G}$. For two locally compact Abelian groups $G_1$ and $G_2$ that are isomorphic as topological groups, the notation $G_1 \cong G_2$ is used. 

The Haar measure on $G$ is always assumed to be given and is denoted by $\mu_G$. For a closed subgroup $H \subseteq G$, the corresponding quotient $G / H$ is a locally compact Abelian group as well. The groups $G$, $H$ and $G/H$ carry Haar measures $\mu_G$, $\mu_H$ and $\mu_{G/H}$, respectively. If two out of the three Haar measures $\mu_G$, $\mu_H$ and $\mu_{G/H}$ are given, then the third one can be normalised such that, for all $f \in L^1 (G)$,
\begin{align} \label{eq:weil}
\int_G f(x) \; d\mu_G (x) = \int_{G/H} \int_H f(x+h) \; d\mu_{H} (h) d\mu_{G/H} (\dot{x}),
\end{align}
where $\dot{x} = x + H$ with $x \in G$. The formula \eqref{eq:weil} is known as \emph{Weil's integral formula} and it is always assumed that the measures $\mu_G$, $\mu_H$ and $\mu_{G/H}$ are normalised such that \eqref{eq:weil} holds. The measures are then said to be \emph{canonically related}.

The Haar measure on the dual group $\widehat{G}$ is taken to be the Plancherel measure relative to the given measure $\mu_G$ on $G$. The annihilator $H^{\perp}$ of a closed subgroup $H \subseteq G$ is the closed subgroup $H^{\perp} := \{\omega \in \widehat{G} \; | \; \omega (x) = 1, \; \forall x \in H\}$ of $\widehat{G}$. The Haar measures on $\widehat{H} \cong \widehat{G} / H^{\perp}$ and $\widehat{G / H} \cong H^{\perp}$ are assumed to be the Plancherel measures relative to the measures $\mu_H$ and $\mu_{G/H}$, respectively. The Haar measures $\mu_{\widehat{G}}$, $\mu_{H^{\perp}}$ and $\mu_{\widehat{G} / H^{\perp}}$ chosen in this manner are canonically related. For other standard facts on locally compact Abelian groups, the reader is referred to \cite{hewitt1963abstract, rudin1962fourier}. 

\subsection{Generalised translation-invariant systems} \label{sec:GTI_LCA}

\begin{definition}
Let $J$ be a countable index set. For each $j \in J$, let $\Gamma_j \subseteq G$ be a closed, co-compact subgroup and let $P_j$ be an arbitrary index set. Let $\cup_{j \in J} \{g_{j,p} \}_{p \in P_j}$ be a family in $L^2 (G)$. The system of translates 
\[ \bigcup_{j \in J} \bigg\{ T_{\gamma} g_{j,p} \; : \; \gamma \in \Gamma_j, \; p \in P_j \bigg\} 
= \bigcup_{j \in J} \bigg\{  g_{j,p} (\cdot - \gamma) \; : \; \gamma \in \Gamma_j, \; p \in P_j \bigg\} \]
is called a \emph{generalised translation-invariant (GTI) system} in $L^2 (G)$. 
\end{definition}

For each translation subgroup $\Gamma_j$, it is assumed that the Haar measure $\mu_{\Gamma_j}$  is given. In this case there exists a unique quotient measure $\mu_{G / \Gamma_j}$ provided by Weil's integral formula. Using this quotient measure, the \emph{covolume} of $\Gamma_j \subseteq G$ is defined as
\[ \covol(\Gamma_j) = \mu_{G / \Gamma_j} (G / \Gamma_j). \]
The covolume $\covol (\Gamma_j)$ is finite precisely when $\Gamma_j \subseteq G$ is co-compact. In the special case of a uniform lattice $\Gamma_j \subseteq G$, e.g., a discrete, co-compact subgroup, the covolume coincides with the measure of a Borel transversal $X_j \subseteq G$ of $\Gamma_j$ in $G$ provided that $\Gamma_j$ is equipped with the counting measure. Unless otherwise specified, throughout this paper, any uniform lattice is assumed to be equipped with the counting measure. 

The generating functions of a generalised translation-invariant system are assumed to satisfy the following mild conditions, which will be referred to as the \emph{standing hypotheses} in accordance with \cite{jakobsen2016reproducing}. Here, the symbol $\mathcal{B}_X$ denotes the Borel $\sigma$-algebra on a topological space $X$.

\paragraph{\textbf{Standing hypotheses.}} For each $j \in J$,
\begin{enumerate}[(I)]
\item The triple $(P_j, \Sigma_{P_j}, \mu_{P_j})$ forms a $\sigma$-finite measure space;
\item The mapping $(P_j, \Sigma_{P_j}) \to (L^2 (G), \mathcal{B}_{L^2
    (G)}), \; p \mapsto g_{j,p}$ is $\Sigma_{P_j}$-measurable;
\item The mapping $(P_j \times G, \Sigma_{P_j} \otimes \mathcal{B}_G)
  \to (\mathbb{C}, \mathcal{B}_{\mathbb{C}}), \; (p, x) \mapsto
  g_{j,p} (x)$ is $(\Sigma_{P_j} \otimes \mathcal{B}_G)$-measurable.
\end{enumerate}

\begin{remark}
The standing hypotheses on the generating functions are automatically satisfied whenever the index sets $P_j$, $j \in J$, are locally compact Hausdorff spaces equipped with a Borel $\sigma$-algebra and a Radon measure and each map $p \mapsto g_{j,p}$ is continuous. Hence, in particular, the standing hypotheses are satisfied whenever each $P_j$, $j \in J$, is countable and equipped with the discrete $\sigma$-algebra and a (weighted) counting measure. 
\end{remark}

By the standing hypotheses, the integrals in the next definition are well-defined.

\begin{definition}
A generalised translation-invariant system $\cup_{j \in J} \{T_{\gamma} g_{j,p} \}_{\gamma \in \Gamma_j, p \in P_j}$ is called a \emph{generalised translation-invariant frame} for $L^2 (G)$ if there exist constants $A, B > 0$, called \emph{frame bounds}, such that 
\begin{align*}
A \| f \|_2^2 \leq \sum_{j \in J} \int_{P_j} \int_{\Gamma_j} | \langle f, T_{\gamma} g_{j,p} \rangle |^2 \; d\mu_{\Gamma_j} (\gamma) d\mu_{P_j} (p) \leq B \|f\|_2^2
\end{align*}
for all $f \in L^2 (G)$. A system $\cup_{j \in J} \{T_{\gamma} g_{j,p} \}_{\gamma \in \Gamma_j, p \in P_j}$ satisfying the upper bound is called a \emph{Bessel family}. 
\end{definition}

For a generalised translation-invariant system  $\cup_{j \in J} \{T_{\gamma} g_{j,p} \}_{\gamma \in \Gamma_j, p \in P_j}$ forming a frame, there always exists a family $\cup_{j \in J} \{\tilde{g}_{j,p,\gamma} \}_{\gamma \in \Gamma_j, p \in P_j}$ in  $L^2 (G)$ giving rise to reproducing formulae, given weakly by
\[
f = \sum_{j \in J} \int_{P_j} \int_{\Gamma_j}  \langle f, \tilde{g}_{j,p,\gamma} \rangle T_{\gamma} g_{j,p} \; d\mu_{\Gamma_j} (\gamma) d\mu_{P_j} (p) = \sum_{j \in J} \int_{P_j} \int_{\Gamma_j}  \langle f, T_{\gamma} g_{j,p} \rangle  \tilde{g}_{j,p,\gamma}\; d\mu_{\Gamma_j} (\gamma) d\mu_{P_j} (p)
\]
for all $f \in L^2 (G)$. In particular, if the frame bounds of a frame can be chosen to be one, then the family  $\cup_{j \in J} \{\tilde{g}_{j,p,\gamma} \}_{\gamma \in \Gamma_j, p \in P_j}$ can be chosen to be $\cup_{j \in J} \{ T_{\gamma} g_{j,p} \}_{\gamma \in \Gamma_j, p \in P_j}$.

For more on frame theory, the reader is referred to Christensen's book \cite{christensen2016introduction}. 

\subsection{Integrability conditions and the uniform counting estimate} \label{sec:LIC_counting}

Let $\mathcal{E}$ denote the collection of all closed Borel sets $E \subseteq \widehat{G}$ with Haar measure zero. For an $E \in \mathcal{E}$, define the subspace $\mathcal{D}_E (G)$ of $L^2 (G)$ as
\[  \mathcal{D}_E (G) = \bigg\{f \in L^2 (G) \; \bigg| \; \hat{f} \in L^{\infty}_c (\widehat{G}), \; \supp \hat{f} \subseteq \widehat{G} \setminus E \bigg\}, \]
where $\hat{f}$ denotes the Fourier-Plancherel transform of $f \in L^2 (G)$. 
The space $\mathcal{D}_E (G)$ is translation-invariant and norm dense in $L^2 (G)$.

Phrased in terms of $\mathcal{D}_E (G)$, the local integrability condition is defined as follows.

\begin{definition}
A generalised translation-invariant system $\cup_{j \in J} \{T_{\gamma} g_{j,p} \}_{\gamma \in \Gamma_j, p \in P_j}$ is said to satisfy the \emph{local integrability condition} (LIC), with respect to $E \in \mathcal{E}$, if
\begin{align} \label{eq:LIC}
 \sum_{j \in J} \frac{1}{\covol (\Gamma_j)} \int_{P_j} \sum_{\alpha
   \in \Gamma_j^{\perp}} \int_{\supp \hat{f}} | 
 \hat{f}(\omega  \alpha)  \hat{g}_{j,p} (\omega)|^2 \;  d\mu_{\widehat{G}} (\omega) d\mu_{P_j} (p) <
 \infty 
\end{align}
for all $f \in \mathcal{D}_E (G)$. 
\end{definition}

Equivalently, a system $\cup_{j \in J} \{T_{\gamma} g_{j,p} \}_{\gamma \in \Gamma_j, p \in P_j}$ satisfies the local integrability condition with respect to $E \in \mathcal{E}$ if
\[
 \sum_{j \in J} \frac{1}{\covol (\Gamma_j)} \int_{P_j} \sum_{\alpha
   \in \Gamma_j^{\perp}} \int_{K \cap \alpha^{-1} K} | 
  \hat{g}_{j,p} (\omega)|^2 \;  d\mu_{\widehat{G}} (\omega) d\mu_{P_j} (p) <
 \infty 
\]
for all compact sets $K \subseteq \widehat{G} \setminus E$.

The local integrability condition depends on the set $E \in \mathcal{E}$, which is sometimes called the \emph{blind spot} of the system \cite{fuhr2015coorbit}. Intuitively, the blind spot $E$ can be chosen to consist of points in the Fourier domain which the associated system fails to resolve. In Euclidian space $G = \mathbb{R}^d$, the blind spot is often chosen to be $E = \emptyset$ or $E = \{0\}$. Clearly, if the local integrability condition \eqref{eq:LIC} is satisfied with respect to $E = \emptyset$, then it is satisfied with respect to any $E \in \mathcal{E}$. 

\begin{definition}
A generalised translation-invariant system $\cup_{j \in J} \{T_{\gamma} g_{j,p} \}_{\gamma \in \Gamma_j, p \in P_j}$ is said to satisfy the \emph{Calder\'on integrability condition}, with respect to $E \in \mathcal{E}$, if
\begin{align} \label{eq:CalderonLI}
\sum_{j \in J}  \frac{1}{\covol(\Gamma_j)} \int_{P_j}  |\hat{g}_{j,p} (\cdot )|^2 \; d\mu_{P_j} (p) \in L^1_{\loc} (\widehat{G} \setminus E).
\end{align}
\end{definition}

It is customary to call the term $\sum_{j \in J}  \covol(\Gamma_j)^{-1} \int_{P_j}  |\hat{g}_{j,p} (\cdot )|^2 \; d\mu_{P_j} (p)$ the \emph{Calder\'on sum} or the \emph{Calder\'on integral} in accordance with wavelet theory \cite{chui2002characterization, fuhr2010generalized}.  For example, see \eqref{eq:Calderon_wavelets} below for the commonly known Calder\'on sum of a discrete wavelet system. 

In contrast to the local integrability condition \eqref{eq:LIC} and the Calder\'on integrability condition \eqref{eq:CalderonLI}, the next condition involves solely the generating functions of the system. 

\begin{definition}
Let $\cup_{j \in J} \{g_{j,p} \}_{p \in P_j}$  be a family of functions $g_{j,p} \in L^2 (G)$ satisfying the standing hypotheses. The family $\cup_{j \in J} \{g_{j,p} \}_{p \in P_j}$ is said to be \emph{strictly temperate}, 
with respect to $E \in \mathcal{E}$, if 
\begin{align} \label{eq:norm-bounded}
 \sum_{j \in J} \int_{P_j} |\hat{g}_{j,p} (\cdot )|^2 \; d\mu_{P_j} (p) \in L^1_{\loc} (\widehat{G} \setminus E).
\end{align}
\end{definition}

By Lemma \ref{lem:temperate_wavelet} below, the strictly temperateness condition is automatically satisfied for regular wavelet systems and for this reason it occurs only implicitly in wavelet theory.  The condition is, however, non-trivial for arbitrary generalised translation-invariant systems and occurs in \cite{christensen2017explicit, ron2005generalized}. More precisely, the condition \eqref{eq:norm-bounded} is a special case of the \emph{tempered} condition \cite[Definition 4.1]{ron2005generalized} whose terminology is adopted here. 

The last condition that will be introduced is the uniform counting estimate. As a motivation, it is instructive to relate it to the notion of relatively separateness. Recall that a discrete set $\Lambda \subseteq \widehat{G}$ is called \emph{relatively separated} if, for all relatively compact neighbourhoods of the identity $V$ with non-empty interior, 
\begin{align} \label{eq:relatively_separated}
\rel(\Lambda) := \sup_{\omega \in \widehat{G}} \# (\Lambda \cap \omega V) < \infty
\end{align}
The quantity $\rel(\Lambda)$ is called the \emph{spreadness} of $\Lambda \subseteq \widehat{G}$.
It suffices to check \eqref{eq:relatively_separated} for a single neighbourhood, cf. the proof of Lemma \ref{lem:neighbourhood} below. 

 The uniform counting estimate provides an upper bound on the spreadness of each annihilator $\Gamma_j^{\perp}$ involving the covolume of $\Gamma_j \subseteq G$. 

\begin{definition}
Let $(\Gamma_j)_{j \in J}$ be a family of closed, co-compact subgroups $\Gamma_j \subseteq G$. The family $(\Gamma_j)_{j \in J}$ is said to satisfy the \emph{uniform counting estimate} if, for every compact set $K \subseteq \widehat{G}$, there exists a constant $C = C(K) > 0$ such that 
\begin{align} \label{eq:counting}
 \sup_{\omega \in \widehat{G}} \sum_{\alpha \in \Gamma_j^{\perp}} \mathds{1}_{K} ( \omega \alpha) \leq C \bigg(1+  \covol(\Gamma_j) \bigg)
\end{align}
for all $j \in J$.
\end{definition}

Counting estimates for lattice points are contained, mainly implicitly, in numerous papers on wavelet theory including \cite{chui2002characterization,calogero2000characterization,hernandez2002unified}. For lattices systems $(\Gamma_j)_{j \in \mathbb{Z}}$ in $\mathbb{R}^d$ whose elements arise as images of a full-rank lattice under integer powers of a non-singular matrix, the uniform counting estimate coincides with the so-called \emph{lattice counting estimate} used in wavelet theory \cite{chui2002characterization,bownik2017wavelets,guo2006some,bownik2011affine,hernandez2002unified}. See also Section \ref{sec:Euclid} below. 

\begin{remark}
In \cite{ron2005generalized}, a family $(\Gamma_j)_{j \in J}$ of full-rank lattices in $\Gamma_j \subseteq \mathbb{R}^d$ is called \emph{round} if there exists a constant $C > 0$ such that, for every $r >0$ and $j \in J$,
\begin{align} \label{eq:round}
\| \sum_{\alpha \in \Gamma_j^{\perp}} \mathds{1}_{B_r(0)} (\cdot + \alpha) \|_{\infty}  \leq 1 + C \covol(\Gamma_j) \mu_{\widehat{\mathbb{R}}^d} (B_r(0)),
\end{align} 
where $B_r (0)$ denotes the Euclidean ball. 
The roundedness condition \eqref{eq:round} differs from the uniform counting estimate \eqref{eq:counting} by the position of the constant $C > 0$. 
Examples in \cite{guo2006some} demonstrate that the uniform counting estimate \eqref{eq:counting}
might be satisfied, while \eqref{eq:round} fails. 
\end{remark}

In verifying the uniform counting estimate, it suffices to do so on a single compact neighbourhood of the identity with non-empty interior.

\begin{lemma} \label{lem:neighbourhood}
Suppose a family $(\Gamma_j)_{j \in J}$ of closed, co-compact subgroups $\Gamma_j \subseteq G$ satisfies \eqref{eq:counting} for a relatively compact neighbourhood of the identity with non-empty interior. Then $(\Gamma_j)_{j \in J}$ satisfies the uniform counting estimate.
\end{lemma}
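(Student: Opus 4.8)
The plan is to reduce the uniform counting estimate for an arbitrary compact set $K \subseteq \widehat{G}$ to the hypothesis, which gives the estimate \eqref{eq:counting} for one fixed relatively compact neighbourhood $V$ of the identity with non-empty interior. The key geometric fact is that a compact set can be covered by finitely many translates of $V$: since $\{\omega V^{\circ} : \omega \in \widehat{G}\}$ is an open cover of the compact set $K$, there exist $\omega_1, \dots, \omega_N \in \widehat{G}$ with $K \subseteq \bigcup_{i=1}^N \omega_i V$. The number $N$ depends only on $K$ and $V$, not on $j$, and $V$ is fixed once and for all.

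First I would fix $K$ and choose such a covering $K \subseteq \bigcup_{i=1}^N \omega_i V$. Then for any $\omega \in \widehat{G}$ and any $j \in J$, I would estimate
\begin{align*}
\sum_{\alpha \in \Gamma_j^{\perp}} \mathds{1}_{K}(\omega \alpha)
\leq \sum_{\alpha \in \Gamma_j^{\perp}} \sum_{i=1}^N \mathds{1}_{\omega_i V}(\omega \alpha)
= \sum_{i=1}^N \sum_{\alpha \in \Gamma_j^{\perp}} \mathds{1}_{V}(\omega_i^{-1} \omega \alpha)
\leq \sum_{i=1}^N \sup_{\eta \in \widehat{G}} \sum_{\alpha \in \Gamma_j^{\perp}} \mathds{1}_{V}(\eta \alpha).
\end{align*}
Taking the supremum over $\omega \in \widehat{G}$ on the left and applying the hypothesis \eqref{eq:counting} for $V$, with its constant $C_V > 0$, to each of the $N$ terms on the right yields
\[
\sup_{\omega \in \widehat{G}} \sum_{\alpha \in \Gamma_j^{\perp}} \mathds{1}_{K}(\omega \alpha)
\leq N \cdot C_V \bigl(1 + \covol(\Gamma_j)\bigr)
\]
for all $j \in J$. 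Setting $C(K) := N C_V$ gives the uniform counting estimate for $K$, and since $K$ was arbitrary this completes the proof.

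There is no serious obstacle here; the argument is a routine compactness-and-covering reduction. The only minor point worth stating carefully is that the constant $N$ in the covering is genuinely independent of $j$ — which it is, because the covering is chosen using only $K$ and the single fixed neighbourhood $V$ — so that the resulting constant $C(K) = N C_V$ witnesses \eqref{eq:counting} uniformly over all $j \in J$. One should also note that $V$ having non-empty interior is exactly what makes $\{\omega V^{\circ}\}_{\omega}$ an open cover of $\widehat{G}$, hence of $K$; this is why the hypothesis is phrased for such a neighbourhood.
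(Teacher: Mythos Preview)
Your proof is correct and follows essentially the same approach as the paper's own proof: cover the compact set $K$ by finitely many translates $\omega_i V$ using that $V^{\circ} \neq \emptyset$, bound $\mathds{1}_K \leq \sum_i \mathds{1}_{\omega_i V}$, use translation invariance of the supremum, and apply the hypothesis for $V$ to each of the $N$ terms to obtain $C(K) = N C_V$. The paper's proof is slightly less explicit about the translation step $\mathds{1}_{\omega_i V}(\omega\alpha) = \mathds{1}_V(\omega_i^{-1}\omega\alpha)$, but otherwise the arguments are identical.
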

\begin{proof}
Let $V$ be a relatively compact neighbourhood of $1 \in \widehat{G}$ with interior $V^{\circ} \neq \emptyset$. Suppose that there exists a constant $C_V > 0$ such that
\begin{align*} 
 \sup_{\omega \in \widehat{G}} \sum_{\alpha \in \Gamma_j^{\perp}} \mathds{1}_{V} ( \omega \alpha) \leq C_V \bigg(1+  \covol(\Gamma_j) \bigg) 
\end{align*}
for all $j \in J$. 
Let $K \subseteq \widehat{G}$ be an arbitrary compact set. The collection $\{\omega V^{\circ} \; | \; \omega \in \widehat{G}\}$ forms an open covering for $K \subseteq \widehat{G}$. Hence there exists a finite set $(\omega_i)_{i = 1}^n \subseteq \widehat{G}$ such that $\{\omega_i V^{\circ} \}_{i = 1}^n$ forms a  covering of $K$. Therefore, for all $j \in J$, 
\begin{align*} 
 \sup_{\omega \in \widehat{G}} \sum_{\alpha \in \Gamma_j^{\perp}} \mathds{1}_{K} ( \omega \alpha) &\leq  \sup_{\omega \in \widehat{G}} \sum_{\alpha \in \Gamma_j^{\perp}} \mathds{1}_{\bigcup_{i = 1}^n w_i V} ( \omega \alpha ) \leq  \sum_{i = 1}^n \sup_{\omega \in \widehat{G}}  \sum_{\alpha \in \Gamma_j^{\perp}} \mathds{1}_{V} ( \omega \alpha ) \\
&\leq \sum_{i = 1}^n C_{V} \bigg( 1 + \covol(\Gamma_j) \bigg),
\end{align*}
which gives the desired result. 
\end{proof}

\section{The local integrability condition}
This section considers the relation and the interplay between the local integrability condition,  the Calder\'on integrability condition, the strictly temperateness condition and the uniform counting estimate defined in the previous section. 

The first result provides a simple sufficient condition for the local integrability condition to be satisfied.

\begin{proposition} \label{prop:LIC_sufficient}
Suppose a generalised translation-invariant system $\cup_{j \in J} \{ T_{\gamma} g_{j,p} \}_{\gamma \in \Gamma_j, p \in P_j}$  satisfies the Calder\'on integrability condition \eqref{eq:CalderonLI}, the temperateness condition \eqref{eq:norm-bounded} and the uniform counting estimate \eqref{eq:counting}. Then  $\cup_{j \in J} \{ T_{\gamma} g_{j,p} \}_{\gamma \in \Gamma_j, p \in P_j}$ satisfies the local integrability condition \eqref{eq:LIC}.
\end{proposition}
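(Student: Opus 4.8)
The plan is to start from the defining sum in the local integrability condition \eqref{eq:LIC} and estimate it term by term, pushing the $\sum_{\alpha \in \Gamma_j^\perp}$ inside and splitting the integrand $|\hat f(\omega\alpha)\hat g_{j,p}(\omega)|^2$ into the product of the two factors, each of which will be controlled by a different hypothesis. Fix $f \in \mathcal{D}_E(G)$ and set $K := \supp \hat f \subseteq \widehat G \setminus E$, which is compact; also write $M := \|\hat f\|_\infty < \infty$. For each $j$ and each $\alpha \in \Gamma_j^\perp$, the factor $|\hat f(\omega\alpha)|^2$ is supported on $\alpha^{-1}K$ and bounded by $M^2$, so that
\[
\sum_{\alpha \in \Gamma_j^\perp} \int_K |\hat f(\omega\alpha)\hat g_{j,p}(\omega)|^2 \, d\mu_{\widehat G}(\omega)
\;\le\; M^2 \int_K \Bigl( \sum_{\alpha \in \Gamma_j^\perp} \mathds{1}_{\alpha^{-1}K}(\omega) \Bigr) |\hat g_{j,p}(\omega)|^2 \, d\mu_{\widehat G}(\omega).
\]
The inner sum is $\sum_{\alpha\in\Gamma_j^\perp}\mathds{1}_K(\omega\alpha)$, and here the uniform counting estimate \eqref{eq:counting} applied to the compact set $K$ yields a constant $C = C(K)$ with $\sum_{\alpha\in\Gamma_j^\perp}\mathds{1}_K(\omega\alpha) \le C(1+\covol(\Gamma_j))$ uniformly in $\omega$ and $j$.

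Inserting this bound, the full sum over $j$ is dominated by
\[
M^2 \sum_{j \in J} \frac{C(1+\covol(\Gamma_j))}{\covol(\Gamma_j)} \int_{P_j} \int_K |\hat g_{j,p}(\omega)|^2 \, d\mu_{\widehat G}(\omega)\, d\mu_{P_j}(p),
\]
where Tonelli's theorem (justified by the standing hypotheses) has been used to exchange the $\omega$- and $p$-integrals. The factor $(1+\covol(\Gamma_j))/\covol(\Gamma_j) = 1 + \covol(\Gamma_j)^{-1}$ splits the series into two pieces. The piece carrying the $\covol(\Gamma_j)^{-1}$ is exactly $\int_K$ of the Calder\'on sum, which is finite by the Calder\'on integrability condition \eqref{eq:CalderonLI} since $K$ is a compact subset of $\widehat G \setminus E$. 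The remaining piece, $\sum_{j}\int_{P_j}\int_K |\hat g_{j,p}|^2$, is $\int_K$ of the function in the temperateness condition \eqref{eq:norm-bounded}, hence finite because $K \subseteq \widehat G \setminus E$ is compact. Both terms being finite, \eqref{eq:LIC} holds for the given $f$, and since $f \in \mathcal{D}_E(G)$ was arbitrary the proof is complete.

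The only genuinely delicate point is the bookkeeping of the measure-theoretic justifications: one must check that the standing hypotheses make $(p,\omega)\mapsto \hat g_{j,p}(\omega)$ jointly measurable so that Tonelli applies to each $j$, and that the interchange of $\sum_{\alpha}$ with $\int_K$ is legitimate (this is just Tonelli for the counting measure on $\Gamma_j^\perp$ against $\mu_{\widehat G}$, with a nonnegative integrand). No further obstacle arises; the three hypotheses are designed precisely to control the three features — the interaction through $\alpha$-overlaps, the covolume normalisation, and the local $L^2$-mass of the generators — that appear once the integrand is expanded. I do not anticipate needing any additional structural input beyond \eqref{eq:CalderonLI}, \eqref{eq:norm-bounded}, \eqref{eq:counting} and the equivalent reformulation of the LIC in terms of compact sets $K$.
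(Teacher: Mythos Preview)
Your proposal is correct and follows essentially the same route as the paper's proof: bound $|\hat f(\omega\alpha)|^2$ by $\|\hat f\|_\infty^2\,\mathds{1}_K(\omega\alpha)$, apply the uniform counting estimate to $\sum_{\alpha}\mathds{1}_K(\omega\alpha)$, and split the resulting factor $(1+\covol(\Gamma_j))/\covol(\Gamma_j)$ into the Calder\'on piece and the temperateness piece. The paper phrases the interchange of sum and integral as an application of Beppo--Levi and Tonelli--Fubini, which is exactly the bookkeeping you flag at the end.
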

\begin{proof}
Let $f \in \mathcal{D}_E(G)$ and set $K := \supp \hat{f} \subseteq \widehat{G} \setminus E$. Then applications of Beppo-Levi's theorem and Tonelli-Fubini's theorem yield
\begin{align*}
 &\sum_{j \in J} \frac{1}{\covol (\Gamma_j)} \int_{P_j} \sum_{\alpha
   \in \Gamma_j^{\perp}} \int_{\supp \hat{f}} | 
 \hat{f}(\omega  \alpha)  \hat{g}_{j,p} (\omega)|^2 \;  d\mu_{\widehat{G}} (\omega)d\mu_{P_j}(p) \\
 &\quad \quad \leq \| \hat{f} \|_{\infty}^2 \sum_{j \in J} \frac{1}{\covol (\Gamma_j)} \int_{P_j} \sum_{\alpha
   \in \Gamma_j^{\perp}} \int_{\supp \hat{f}} | 
 \mathds{1}_{K} (\omega \alpha)  \hat{g}_{j,p} (\omega)|^2 \;  d\mu_{\widehat{G}} (\omega) d\mu_{P_j}(p) \\
& \quad \quad =  \| \hat{f} \|_{\infty}^2 \sum_{j \in J} \frac{1}{\covol (\Gamma_j)} \int_{\supp \hat{f}}  \sum_{\alpha
   \in \Gamma_j^{\perp}}  |  \mathds{1}_K (\omega \alpha)  | \int_{P_j} | \hat{g}_{j,p} (\omega)|^2\;  d\mu_{P_j}(p)    d\mu_{\widehat{G}} (\omega).
\end{align*}
Using the uniform counting estimate \eqref{eq:counting}, the calculation can be continued as
\begin{align*}
 &\sum_{j \in J} \frac{1}{\covol (\Gamma_j)} \int_{P_j} \sum_{\alpha
   \in \Gamma_j^{\perp}} \int_{\supp \hat{f}} | 
 \hat{f}(\omega  \alpha)  \hat{g}_{j,p} (\omega)|^2 \;  d\mu_{\widehat{G}} (\omega) d\mu_{P_j} (p)\\
 &\quad \quad \leq \| \hat{f} \|_{\infty}^2 \sum_{j \in J} \frac{1}{\covol (\Gamma_j)} \int_{ \supp \hat{f}} C  \bigg(1 +  \covol(\Gamma_j) \bigg) \int_{P_j}  |\hat{g}_{j,p} (\omega )|^2  \;d\mu_{P_j} (p)  d\mu_{\widehat{G}} (\omega)  \\
 &\quad \quad = C  \| \hat{f} \|_{\infty}^2  \bigg( \int_{\supp \hat{f}} \sum_{j \in J} \frac{1}{\covol(\Gamma_j)} \int_{P_j}  |\hat{g}_{j,p} (\omega )|^2 \; d\mu_{P_j} (p) d\mu_{\widehat{G}} (\omega) \\
&\quad \quad \quad \quad \quad \quad \quad \quad \quad \quad \quad \quad \quad \quad \quad +  \int_{\supp \hat{f}} \sum_{j \in J} \int_{P_j} |\hat{g}_{j,p} (\omega )|^2 \; d\mu_{P_j} (p) d\mu_{\widehat{G}} (\omega) \bigg)\\
&\quad \quad < \infty,
\end{align*}
where the strict inequality follows by the local integrability assumptions \eqref{eq:CalderonLI} and \eqref{eq:norm-bounded}.
\end{proof}

In general, none of the hypotheses in Proposition \ref{prop:LIC_sufficient} can be omitted. See Proposition \ref{prop:counting_compact}, Proposition \ref{prop:counting_discrete} and Remark \ref {rem:construction_wavelet} below.

\begin{lemma} \label{lem:LIC->CalderonLI}
Suppose the generalised translation-invariant system $\cup_{j \in J} \{T_{\gamma} g_{j,p} \}_{\gamma \in \Gamma_j, p \in P_j}$ satisfies the local integrability condition \eqref{eq:LIC} with respect to $E \in \mathcal{E}$. Then $\cup_{j \in J} \{T_{\gamma} g_{j,p} \}_{\gamma \in \Gamma_j, p \in P_j}$ satisfies the Calder\'on integrability condition \eqref{eq:CalderonLI} with respect to $E$.
\end{lemma}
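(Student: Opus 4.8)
The plan is to deduce the Calder\'on integrability condition \eqref{eq:CalderonLI} from the local integrability condition \eqref{eq:LIC} by making a judicious choice of test function $f \in \mathcal{D}_E(G)$, namely one whose Fourier transform is the indicator of a suitable compact set, and then isolating the term $\alpha = 1$ in the inner sum over $\Gamma_j^{\perp}$.

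\textbf{Key steps.}
First I would fix an arbitrary compact set $K \subseteq \widehat{G} \setminus E$. Since $\widehat{G} \setminus E$ is open and $K$ is compact, I can choose a relatively compact open set $U$ with $K \subseteq U$ and $\overline{U} \subseteq \widehat{G} \setminus E$; then set $f := (\mathds{1}_{\overline{U}})^{\vee}$, which lies in $\mathcal{D}_E(G)$ because $\hat{f} = \mathds{1}_{\overline{U}} \in L^{\infty}_c(\widehat{G})$ with support in $\widehat{G} \setminus E$. Second, I would substitute this $f$ into \eqref{eq:LIC}. The point is that the summand over $\Gamma_j^{\perp}$ is a sum of non-negative terms, so the whole expression dominates the single term corresponding to $\alpha = 1 \in \Gamma_j^{\perp}$, for which $\hat{f}(\omega \cdot 1) \hat{g}_{j,p}(\omega) = \mathds{1}_{\overline{U}}(\omega) \hat{g}_{j,p}(\omega)$. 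Third, by Tonelli's theorem I can interchange the (non-negative) integrals and sum over $j$ to obtain
\[
\int_{\overline{U}} \sum_{j \in J} \frac{1}{\covol(\Gamma_j)} \int_{P_j} |\hat{g}_{j,p}(\omega)|^2 \; d\mu_{P_j}(p) \, d\mu_{\widehat{G}}(\omega) \leq \sum_{j \in J} \frac{1}{\covol(\Gamma_j)} \int_{P_j} \sum_{\alpha \in \Gamma_j^{\perp}} \int_{\supp \hat{f}} |\hat{f}(\omega\alpha)\hat{g}_{j,p}(\omega)|^2 \, d\mu_{\widehat{G}}(\omega) d\mu_{P_j}(p) < \infty,
\]
the last inequality being exactly \eqref{eq:LIC} for this $f$. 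Since $K \subseteq \overline{U}$ and the integrand is non-negative, the integral over $K$ of the Calder\'on sum is finite as well. As $K$ was an arbitrary compact subset of $\widehat{G} \setminus E$, this shows that the Calder\'on sum belongs to $L^1_{\loc}(\widehat{G} \setminus E)$, which is \eqref{eq:CalderonLI}.

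\textbf{Main obstacle.}
There is no serious obstacle here; the argument is essentially a one-line observation that retaining only the $\alpha = 1$ term in \eqref{eq:LIC} already yields \eqref{eq:CalderonLI}. The only points requiring a modicum of care are measure-theoretic: checking that $(\mathds{1}_{\overline{U}})^{\vee}$ genuinely lies in $\mathcal{D}_E(G)$ (it does, since $\overline{U}$ is compact with $\overline{U} \cap E = \emptyset$), and justifying the interchange of sum and integrals via Tonelli's theorem, which is legitimate because every term is non-negative and the standing hypotheses guarantee joint measurability of $(p,\omega) \mapsto |\hat{g}_{j,p}(\omega)|^2$. I would also remark that one does not even need $U$ to be open — taking $\hat f = \mathds{1}_K$ directly works if $K$ itself is closed, which it is — so the passage to $\overline{U}$ is only a cosmetic safeguard.
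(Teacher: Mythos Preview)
Your proposal is correct and is essentially the same argument as the paper's: both isolate the $\alpha = 1$ summand in the LIC and invoke Tonelli/Beppo--Levi to pull the sum over $j$ outside the integral. The only cosmetic difference is that the paper works directly with the equivalent compact-set formulation of \eqref{eq:LIC} (stated immediately after the definition) rather than building a test function $f$; as you yourself note at the end, taking $\hat f = \mathds{1}_K$ amounts to exactly that.
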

\begin{proof}
 Let $K \subseteq \widehat{G} \setminus E$ be compact. Applications of Beppo Levi's theorem and Tonelli's theorem yield
\begin{align*}
&\int_K \sum_{j \in J} \frac{1}{\covol (\Gamma_j)} \int_{P_j} |\hat{g}_{j,p} (\omega)|^2 \; d\mu_{P_j} (p) d\mu_{\widehat{G}} (\omega) \\
& \quad \quad \quad \quad \quad =  \sum_{j \in J} \frac{1}{\covol (\Gamma_j)}  \int_{P_j} \int_K |\hat{g}_{j,p} (\omega)|^2 \;  d\mu_{\widehat{G}} (\omega) d\mu_{P_j} (p)  \\
 & \quad \quad \quad \quad \quad \leq \sum_{j \in J} \frac{1}{\covol (\Gamma_j)} \int_{P_j} \sum_{\alpha \in \Gamma_j^{\perp}} \int_{K \cap \alpha^{-1} K} | \hat{g}_{j,p} (\omega)|^2 \;  d\mu_{\widehat{G}} (\omega) d\mu_{P_j} (p) < \infty,
\end{align*}
as required.
\end{proof}

The Calder\'{o}n integrability condition is automatically satisfied for any generalised translation-invariant system $\cup_{j \in J} \{T_{\gamma} g_{j,p} \}_{\gamma \in \Gamma_j, p \in P_j}$ forming a Bessel system for $L^2 (G)$ or satisfying the so-called CC-condition \cite{lemvig2017sufficient}. The Calder\'{o}n sum of such systems is namely uniformly bounded by the upper frame bound, cf. \cite[Proposition 3.3]{jakobsen2016reproducing}. The frame property is, however, not related to the local integrability condition. For example, a generalised shift-invariant system that forms an orthonormal basis for $L^2 (\mathbb{R})$, but that fails the local integrability condition is contained in \cite[Example 3.2]{bownik2004spectral}. See also \cite[Proposition 4.6]{kutyniok2006theory} for a similar example in $\ell^2 (\mathbb{Z})$. 

The following simple observation is an adaption of \cite[Proposition 3.2]{christensen2017explicit}. 

\begin{lemma}
Suppose $\cup_{j \in J} \{T_{\gamma} g_{j,p} \}_{p \in P_j, \gamma \in \Gamma_j}$ satisfies the Calder\'on integrability condition \eqref{eq:CalderonLI} with respect to $E \in \mathcal{E}$. Then the generating functions $\cup_{j \in J} \{g_{j,p} \}_{p \in P_j}$ satisfy the temperateness condition \eqref{eq:norm-bounded}, with respect to $E$, if there exists a constant $N > 0$ such that
\begin{align} \label{eq:simplerLI}
\sum_{\{j \in J \; | \; \covol(\Gamma_j) > N\}} \int_{P_j} |\hat{g}_{j,p}(\cdot) |^2 \; d\mu_{P_j} (p) \in L^1_{\loc} (\widehat{G} \setminus E). 
\end{align}
\end{lemma}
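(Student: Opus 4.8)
The plan is to split the index set $J$ into the two pieces $J_{\leq N} := \{j \in J \; | \; \covol(\Gamma_j) \leq N\}$ and $J_{> N} := \{j \in J \; | \; \covol(\Gamma_j) > N\}$, to control the Calder\'on-type sum over each piece separately, and then to recombine using that $L^1_{\loc}(\widehat{G} \setminus E)$ is closed under addition of non-negative measurable functions. Throughout, all summands $\int_{P_j} |\hat{g}_{j,p}(\cdot)|^2 \, d\mu_{P_j}(p)$ are non-negative and measurable by the standing hypotheses, so Beppo Levi's theorem freely justifies interchanging the sum over $j$ with integrals over compact sets $K \subseteq \widehat{G} \setminus E$, and enlarging a sum over a subset of $J$ to a sum over all of $J$ only increases its value.

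For indices $j \in J_{> N}$, the required local integrability of $\sum_{j \in J_{> N}} \int_{P_j} |\hat{g}_{j,p}(\cdot)|^2 \, d\mu_{P_j}(p)$ over $\widehat{G} \setminus E$ is exactly the hypothesis \eqref{eq:simplerLI}. For indices $j \in J_{\leq N}$, I would use the elementary bound $1 \leq N / \covol(\Gamma_j)$, valid since $0 < \covol(\Gamma_j) \leq N$, to obtain, for every compact $K \subseteq \widehat{G} \setminus E$,
\[
\int_K \sum_{j \in J_{\leq N}} \int_{P_j} |\hat{g}_{j,p}(\omega)|^2 \, d\mu_{P_j}(p) \, d\mu_{\widehat{G}}(\omega) \leq N \int_K \sum_{j \in J} \frac{1}{\covol(\Gamma_j)} \int_{P_j} |\hat{g}_{j,p}(\omega)|^2 \, d\mu_{P_j}(p) \, d\mu_{\widehat{G}}(\omega),
\]
and the right-hand side is finite by the Calder\'on integrability condition \eqref{eq:CalderonLI}.

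Finally, since $J = J_{\leq N} \sqcup J_{> N}$, the non-negativity of the summands gives the pointwise identity $\sum_{j \in J} \int_{P_j} |\hat{g}_{j,p}(\cdot)|^2 \, d\mu_{P_j}(p) = \sum_{j \in J_{\leq N}} \int_{P_j} |\hat{g}_{j,p}(\cdot)|^2 \, d\mu_{P_j}(p) + \sum_{j \in J_{> N}} \int_{P_j} |\hat{g}_{j,p}(\cdot)|^2 \, d\mu_{P_j}(p)$, and both terms on the right have just been shown to lie in $L^1_{\loc}(\widehat{G} \setminus E)$; hence so does their sum, which is precisely the temperateness condition \eqref{eq:norm-bounded}. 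There is no real obstacle here: the proof is essentially the observation that dividing by a covolume bounded above only costs the constant factor $N$, and the only points needing a word are the measurability supplied by the standing hypotheses and the strict positivity of $\covol(\Gamma_j)$, which holds because the quotient $G/\Gamma_j$ is compact and of non-zero Haar measure.
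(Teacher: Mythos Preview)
Your proof is correct and follows essentially the same approach as the paper: split $J$ according to whether $\covol(\Gamma_j) \leq N$ or $\covol(\Gamma_j) > N$, bound the first piece by $N$ times the Calder\'on sum via $1 \leq N/\covol(\Gamma_j)$, and handle the second piece directly by hypothesis~\eqref{eq:simplerLI}. The paper's argument is just a more compressed version of the same estimate.
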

\begin{proof}
Suppose the Calder\'on integrability condition \eqref{eq:CalderonLI} is satisfied for some $E \in \mathcal{E}$ and that \eqref{eq:simplerLI} holds for an $N > 0$. Then
\begin{align*}
\sum_{j \in J} \int_{P_j} |\hat{g}_{j,p}(\omega) |^2 \; d\mu_{P_j} (p) &\leq N \sum_{\{j \in J \; | \; \covol(\Gamma_j) \leq N\}} \frac{1}{\covol(\Gamma_j)} \int_{P_j} |\hat{g}_{j,p}(\omega) |^2 \; d\mu_{P_j} (p) \\
& \quad \quad \quad + \sum_{\{j \in J \; | \; \covol(\Gamma_j) > N\}} \int_{P_j} |\hat{g}_{j,p}(\omega) |^2 \; d\mu_{P_j} (p)
\end{align*}
for all $\omega \in \widehat{G} \setminus E$. Integration over a compact set $K \subseteq \widehat{G} \setminus E$ therefore gives the desired result. 
\end{proof}

Contrary to the Calder\'on integrability condition \eqref{eq:CalderonLI}, the temperateness condition \eqref{eq:norm-bounded} is not a necessary condition for the local integrability condition \eqref{eq:LIC} as shown by the next example.

\begin{example} \label{ex:mainexample}
Let $G = \mathbb{R}^2$, let $J = \mathbb{N}$ and fix some $0 < a < \frac{1}{10}$. For each $j \in \mathbb{N}$, let
\[
C_j := \begin{pmatrix}
a & j \\
-a & j 
\end{pmatrix} \in \mathrm{GL}(2, \mathbb{R}),
\]
and define the full-rank lattices $\Gamma_j = C_j \mathbb{Z}^2$. 

For $n \in \mathbb{N}$, let
\[
I_n := [-1 + 2^{-n}, -1 + 2^{-(n-1)})^2
\]
and
\[
I_0 := [-1,1]^2 \setminus \bigcup_{n \in \mathbb{N}} I_n.
\]
Observe that $[-1,1]^2 = \bigcup_{n \in \mathbb{N}_0} I_n$ with the union being disjoint. 

It is claimed that the following counting estimate holds
\begin{align} \label{eq:supIn}
\sup_{\omega \in I_n} \sum_{k \in \mathbb{Z}^2} \mathds{1}_{[-1,1]^2} (\omega + (C_j^T)^{-1} k) \leq 4j 2^{-(n-1)} + 1
\end{align}
for all $j \in \mathbb{N}$ and $n \in \mathbb{N}_0$. In order to see this, let $j \in \mathbb{N}$ be fixed. Consider the cases $n = 0$ and $n \in \mathbb{N}$. For $n = 0$, clearly $[-1,1]^2 - \omega \subseteq [-2, 2]^2$ for any $w \in I_0$. Therefore
\[
\# \bigg( \Gamma_j^{\perp} \cap( [-1,1]^2 - \omega) \bigg) \leq \# \bigg( \Gamma_j^{\perp} \cap [-2,2]^2 \bigg)
\]
for all $\omega \in I_0$. Note that any $\alpha \in \Gamma_j^{\perp} = (C_j^T)^{-1} \mathbb{Z}^2$ is of the form
\[
\alpha = 
\begin{pmatrix}
\frac{m_1}{2a} + \frac{m_2}{2j} \\
\frac{m_1}{2a} - \frac{m_2}{2j}
\end{pmatrix} \in \mathbb{R}^2
\]
for some $(m_1, m_2) \in \mathbb{Z}^2$. Hence $\alpha \in [-2, 2]^2$ if, and only if,
\begin{align} \label{eq:condn0}
\frac{m_1}{2a} + \frac{m_2}{2j} \in [-2,2] \quad \text{and} \quad
\frac{m_1}{2a} - \frac{m_2}{2j} \in [-2,2].
\end{align}
Since $a < \frac{1}{10}$ by assumption, it follows that necessarily $m_1 = 0$ whenever $\alpha \in [-2,2]^2$. Thus condition \eqref{eq:condn0} becomes $m_1 = 0$ and $m_2  \in [-4j,4j]$. But the number of points $m_2 \in \mathbb{Z}$ that are contained in $[-4j,4j]$ does not exceed $8j + 1$, whence \eqref{eq:supIn} follows for $n = 0$. To show \eqref{eq:supIn} for the case $n \in \mathbb{N}$, fix an arbitrary $n \in \mathbb{N}$. Observe that $[-1,1]^2 - \omega \subseteq [-2^{-(n-1)}, 2 - 2^{-n}]^2$ for any $\omega \in I_n$. Since $\alpha \in [-2^{-(n-1)}, 2 - 2^{-n}]^2$ is equivalent to
\begin{align} \label{eq:condn}
\frac{m_1}{2a} + \frac{m_2}{2j} \in [-2^{-(n-1)}, 2 - 2^{-n}] \quad \text{and} \quad
\frac{m_1}{2a} - \frac{m_2}{2j} \in [-2^{-(n-1)}, 2 - 2^{-n}],
\end{align}
again necessarily $m_1 = 0$ provided that $\alpha \in [-2^{-(n-1)}, 2 - 2^{-n}]^2$. Therefore, condition \eqref{eq:condn} reads $m_1 = 0$ and
\[
\frac{n_2}{2j} \in [-2^{-(n-1)}, 2 - 2^{-n}] \cap [-2 + 2^{-n}, 2^{-(n-1)}] = [-2^{-(n-1)}, 2^{-(n-1)}].
\]
Since the number of points $m_2 \in \mathbb{Z}$ satisfying $|m_2| \leq 2j 2^{-(n-1)}$ is at most $4j2^{-(n-1)} + 1$, the claim \eqref{eq:supIn} follows.

Define $\eta : [-1,1]^2 \to \mathbb{R}$ by $\eta (\omega) = \mu_{\widehat{\mathbb{R}}^2} (I_n)^{-1}$ for $\omega \in I_n$. Since $(I_n)_{n \in \mathbb{N}_0}$ forms a partition of $[-1,1]^2$, the function $\eta$ is well-defined. Using $\eta$, define next the Borel sets
\[
\Omega_j := \{ j-1 \leq \eta < j \} = \{\omega \in [-1,1]^2 \; | \; j-1 \leq \eta (\omega) < j \}
\]
for $j \in \mathbb{N}$. Then $[-1,1]^2 = \bigcup_{j \in J} \Omega_j$, where the union is disjoint. For each $j \in \mathbb{N}$, define  $g_j : \mathbb{R}^2 \to \mathbb{C}$ in the Fourier domain as $\hat{g}_j = \eta^{1/2} \cdot \mathds{1}_{\Omega_j}$.
Then $g_j \in L^2 (\mathbb{R}^2)$ since 
\[
\int_{\widehat{\mathbb{R}}^2} | \hat{g}_j (\omega) |^2 \; d\mu_{\widehat{\mathbb{R}}^2} (\omega) = \int_{\Omega_j} |\eta (\omega)| \; d\mu_{\widehat{\mathbb{R}}^2} (\omega) \leq j \cdot d\mu_{\widehat{\mathbb{R}}^2} (\Omega_j) < \infty,
\]
where it is used that $\Omega_j \subseteq [-1,1]^2$ for each $j \in \mathbb{N}$.

In order to show that $\cup_{j \in J} \{T_{\gamma} g_j \}_{\gamma \in \Gamma_j}$ satisfies the local integrability condition, it suffices to take $f \in \mathcal{D}_{\emptyset} (\mathbb{R}^2)$ with $\supp \hat{f} \subseteq [-1,1]^2$ since $\supp\hat{g}_{j} \subseteq [-1,1]^2$ for any $j \in \mathbb{N}$ by construction. Using the estimate \eqref{eq:supIn}, it follows that
\begin{align*}
&\sum_{j \in J} \frac{1}{\covol(\Gamma_j)} \int_{\supp \hat{f}} \sum_{\alpha \in \Gamma_j^{\perp}} |\hat{f} (\omega + \alpha)  \hat{g}_j (\omega)|^2 \; d\mu_{\widehat{\mathbb{R}}^2} (\omega)  \\
& \quad \quad \quad \leq \| \hat{f} \|_{\infty}^2 \sum_{j \in \mathbb{N}} \frac{1}{2aj} \int_{[-1,1]^2} \sum_{\alpha \in \Gamma_j^{\perp}} \mathds{1}_{[-1,1]^2} (\omega + \alpha) | \hat{g}_j (\omega)|^2 \; d\mu_{\widehat{\mathbb{R}}^2} (\omega) \\
&\quad \quad \quad = \| \hat{f} \|_{\infty}^2 \sum_{j \in \mathbb{N}} \frac{1}{2a j} \sum_{n \in \mathbb{N}_0} \int_{I_n} \sum_{\alpha \in \Gamma_j^{\perp}} \mathds{1}_{[-1,1]^2} (\omega + \alpha) | \hat{g}_j (\omega)|^2 \; d\mu_{\widehat{\mathbb{R}}^2} (\omega) \\
&\quad \quad \quad \leq \| \hat{f} \|_{\infty}^2 \sum_{j \in \mathbb{N}} \frac{1}{2a j} \sum_{n \in \mathbb{N}_0} \bigg( \int_{I_n}  4j 2^{-(n-1)}  | \hat{g}_j (\omega)|^2 \; d\mu_{\widehat{\mathbb{R}}^2} +  \int_{I_n} | \hat{g}_j (\omega)|^2 \; d\mu_{\widehat{\mathbb{R}}^2} (\omega) \bigg) \\
&\quad \quad \quad = \| \hat{f} \|_{\infty}^2 \sum_{n \in \mathbb{N}_0} \bigg( \frac{2^{2-n}}{a} \int_{I_n} \sum_{j \in \mathbb{N}} | \hat{g}_j (\omega)|^2 \; d\mu_{\widehat{\mathbb{R}}^2} (\omega) +  \int_{I_n} \sum_{j \in \mathbb{N}}\frac{1}{2aj} | \hat{g}_j (\omega)|^2 \; d\mu_{\widehat{\mathbb{R}}^2} (\omega)\bigg).
\end{align*}
It remains therefore to show that the two summands in the last equality above are finite. For the first summand, observe that $\eta (\omega) = \sum_{j \in \mathbb{N}} |\hat{g}_j (\omega)|^2$ by construction. Therefore
\begin{align*}
\sum_{n \in \mathbb{N}_0} \frac{2^{2-n}}{a} \int_{I_n} \sum_{j \in \mathbb{N}} | \hat{g}_j (\omega)|^2 \; d\mu_{\widehat{\mathbb{R}}^2} (\omega)  = \sum_{n \in \mathbb{N}_0} \frac{2^{2-n}}{a} \int_{I_n} \mu_{\widehat{\mathbb{R}}^2} (I_n)^{-1} \; d\mu_{\widehat{\mathbb{R}}^2} (\omega) =  \sum_{n \in \mathbb{N}_0} \frac{2^{2-n}}{a} = \frac{8}{a}.
\end{align*}
For the second summand, a direct calculation gives
\begin{align*}
 \int_{[-1,1]^2} \sum_{j \in \mathbb{N}}\frac{1}{2aj} | \hat{g}_j (\omega)|^2 \; d\mu_{\widehat{\mathbb{R}}^2} (\omega) &= \sum_{j \in \mathbb{N}} \frac{1}{2aj} \int_{\Omega_j} \eta (\omega) \; d\mu_{\widehat{\mathbb{R}}^2} (\omega) \leq \frac{1}{2a} \sum_{j \in \mathbb{N}}  \int_{\Omega_j} \; d\mu_{\widehat{\mathbb{R}}^2} (\omega) \\
&= (2a)^{-1} \mu_{\widehat{\mathbb{R}}^2} ([-1,1]^2).
\end{align*}
Thus the local integrability condition \eqref{eq:LIC} is satisfied. On the other hand,
\[
\int_{[-1,1]^2} \sum_{j \in \mathbb{N}} |\hat{g}_j (\omega)|^2 \; d\mu_{\widehat{\mathbb{R}}^2} (\omega) = \sum_{n \in \mathbb{N}_0} \int_{I_n} \eta(\omega) \; d\mu_{\widehat{\mathbb{R}}^2} (\omega) = \sum_{n \in \mathbb{N}_0} 1 = \infty,
\]
showing that \eqref{eq:norm-bounded} is not satisfied. 
\end{example}

The next result asserts that under an additional assumption on the translation subgroups $(\Gamma_j)_{j \in J}$, the temperateness condition \eqref{eq:simplerLI} is necessary for the local integrability condition \eqref{eq:LIC} to be satisfied. 

\begin{lemma} \label{lem:lowerCE}
Let $(\Gamma_j)_{j \in J}$ be a family of closed, co-compact subgroups $\Gamma_j \subseteq G$ and let $E \in \mathcal{E}$. Suppose that for any compact set $K \subseteq \widehat{G} \setminus E$ there exists a compact superset $Q \subseteq \widehat{G} \setminus E$ and a constant $C = C(K) >0$ such that
\begin{align} \label{eq:lowerCE}
\essinf_{\omega \in Q} \sum_{\alpha \in \Gamma_j^{\perp}} 1_{Q} (\omega \alpha) \geq C \covol(\Gamma_j) 
\end{align}
 for all $j \in J$.  Moreover,  suppose that $\cup_{j \in J} \{T_{\gamma} g_{j,p} \}_{\gamma \in \Gamma_j, p \in P_j}$ satisfies the local integrability condition \eqref{eq:LIC} with respect to $E$. Then the generating functions $\cup_{j \in J} \{g_{j,p} \}_{p \in P_j}$ are temperate \eqref{eq:norm-bounded} with respect to $E$.
 \end{lemma}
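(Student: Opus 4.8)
The plan is to fix an arbitrary compact set $K \subseteq \widehat{G} \setminus E$ and prove that
\[
\int_K \sum_{j \in J} \int_{P_j} |\hat{g}_{j,p}(\omega)|^2 \; d\mu_{P_j}(p) \, d\mu_{\widehat{G}}(\omega) < \infty ,
\]
which is exactly the temperateness condition \eqref{eq:norm-bounded} with respect to $E$, since $K$ is arbitrary. To this end I would invoke the hypothesis \eqref{eq:lowerCE} to obtain a compact superset $Q \subseteq \widehat{G} \setminus E$ with $K \subseteq Q$ and a constant $C = C(K) > 0$ such that $\sum_{\alpha \in \Gamma_j^{\perp}} \mathds{1}_Q(\omega\alpha) \geq C \covol(\Gamma_j)$ for $\mu_{\widehat{G}}$-a.e. $\omega \in Q$ and all $j \in J$.

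The first step is a pointwise reformulation of the lower counting estimate. Since $\widehat{G}$ is abelian, $\omega\alpha \in Q$ is equivalent to $\omega \in \alpha^{-1}Q$, so $\mathds{1}_Q(\omega\alpha) = \mathds{1}_{\alpha^{-1}Q}(\omega)$, and for $\omega \in Q$ this equals $\mathds{1}_{Q \cap \alpha^{-1}Q}(\omega)$. Hence \eqref{eq:lowerCE} says precisely that for each fixed $j \in J$ and $\mu_{\widehat{G}}$-a.e.\ $\omega \in Q$,
\[
1 \leq \frac{1}{C \covol(\Gamma_j)} \sum_{\alpha \in \Gamma_j^{\perp}} \mathds{1}_{Q \cap \alpha^{-1}Q}(\omega).
\]

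The second step is the estimate itself. Using $K \subseteq Q$ and nonnegativity of the integrand, then the pointwise bound above, and finally Tonelli's theorem together with Beppo Levi's theorem to interchange the sum over $j$, the integrals over $P_j$, the sum over $\alpha \in \Gamma_j^{\perp}$, and the integral over $\widehat{G}$ (all legitimate because every integrand is nonnegative and the spaces $(P_j, \Sigma_{P_j}, \mu_{P_j})$ are $\sigma$-finite by the standing hypotheses), I would compute
\begin{align*}
\int_K \sum_{j \in J} \int_{P_j} |\hat{g}_{j,p}(\omega)|^2 \; d\mu_{P_j}(p)\, d\mu_{\widehat{G}}(\omega)
&\leq \sum_{j \in J} \int_{P_j} \int_Q |\hat{g}_{j,p}(\omega)|^2 \; d\mu_{\widehat{G}}(\omega)\, d\mu_{P_j}(p) \\
&\leq \sum_{j \in J} \frac{1}{C \covol(\Gamma_j)} \int_{P_j} \int_Q \Big(\sum_{\alpha \in \Gamma_j^{\perp}} \mathds{1}_{Q \cap \alpha^{-1}Q}(\omega)\Big) |\hat{g}_{j,p}(\omega)|^2 \; d\mu_{\widehat{G}}(\omega)\, d\mu_{P_j}(p) \\
&= \frac{1}{C} \sum_{j \in J} \frac{1}{\covol(\Gamma_j)} \int_{P_j} \sum_{\alpha \in \Gamma_j^{\perp}} \int_{Q \cap \alpha^{-1}Q} |\hat{g}_{j,p}(\omega)|^2 \; d\mu_{\widehat{G}}(\omega)\, d\mu_{P_j}(p).
\end{align*}
The right-hand side is $C^{-1}$ times the left-hand side of the equivalent form of the local integrability condition \eqref{eq:LIC} evaluated on the compact set $Q \subseteq \widehat{G} \setminus E$, hence finite by assumption, which completes the argument.

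I do not expect a genuine obstacle; the proof is a short measure-theoretic manipulation dual to that of Lemma \ref{lem:LIC->CalderonLI}. The only points requiring a little care are passing from the essential-infimum statement \eqref{eq:lowerCE} to a pointwise-a.e.\ inequality valid separately for each $j$ (the exceptional null set may depend on $j$, which is harmless since $j$ is fixed at that stage), keeping track of the fact that $Q$ and $C$ depend on $K$, and checking that $\omega \mapsto \sum_{\alpha \in \Gamma_j^{\perp}} \mathds{1}_{Q \cap \alpha^{-1}Q}(\omega)$ is Borel measurable so that the interchanges of summation and integration are justified.
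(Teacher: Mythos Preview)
Your proof is correct and follows essentially the same approach as the paper: fix $K$, pass to the compact superset $Q$, use the lower counting estimate \eqref{eq:lowerCE} to dominate $\int_Q |\hat g_{j,p}|^2$ by $\frac{1}{C\,\covol(\Gamma_j)}\sum_{\alpha}\int_{Q\cap\alpha^{-1}Q}|\hat g_{j,p}|^2$, and conclude from the LIC applied to $Q$. The paper presents the same chain of inequalities in the reverse order (starting from the finite LIC expression and working down to $\int_K$), but the argument is identical; your extra remarks on measurability and the $j$-dependent null sets are fine and harmless.
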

\begin{proof}
Fix a compact set $K \subseteq \widehat{G} \setminus E$ and let $Q \subseteq \widehat{G} \setminus E$ be any compact set satisfying $K \subseteq Q$ and \eqref{eq:lowerCE}. Then
\begin{align*}
\infty &>  \sum_{j \in J} \frac{1}{\covol (\Gamma_j)} \int_{P_j} \sum_{\alpha \in \Gamma_j^{\perp}} \int_{Q \cap \alpha^{-1} Q} | \hat{g}_{j,p} (\omega)|^2 \;  d\mu_{\widehat{G}} (\omega) d\mu_{P_j} (p) \\
&= \sum_{j \in J} \frac{1}{\covol (\Gamma_j)} \int_{P_j}  \int_{Q} \sum_{\alpha \in \Gamma_j^{\perp}} \mathds{1}_{  \alpha^{-1} Q} (\omega) | \hat{g}_{j,p} (\omega)|^2 \;  d\mu_{\widehat{G}} (\omega) d\mu_{P_j} (p) \\
&\geq C \sum_{j \in J}  \int_{P_j}  \int_{Q}   | \hat{g}_{j,p} (\omega)|^2 \;  d\mu_{\widehat{G}} (\omega) d\mu_{P_j} (p) \geq C  \sum_{j \in J}  \int_{P_j}  \int_{K}   | \hat{g}_{j,p} (\omega)|^2 \;  d\mu_{\widehat{G}} (\omega) d\mu_{P_j} (p), 
\end{align*}
which shows the result. 
\end{proof}

\begin{remark}
Condition \eqref{eq:lowerCE} provides a lower bound on the number of lattice points that intersect the translates of a compact set. For the case $G = \mathbb{R}^d$, a lower bound on the number of lattice points that intersect a fixed $0$-symmetric convex body is provided by Minkowski's convex body theorem, e.g., cf. \cite[Theorem 3.28]{tao2006additive}. See also the volume packing lemma \cite[Lemma 3.24]{tao2006additive}. A version for non-symmetric convex bodies can be found as \cite[Theorem 2.5]{lagarias1991bounds}.
\end{remark}

The section will be concluded with an example of a system satisfying the local integrability condition \eqref{eq:LIC}, but failing the uniform counting estimate \eqref{eq:counting}.

\begin{example} \label{ex:failingUCEbutLIC}
Let $G = \mathbb{R}^2$ and let $J = \mathbb{N}$. For each $j \in \mathbb{N}$, define 
\[ C_j := 
\begin{pmatrix}
j^{-1} & 0 \\
0 & j
\end{pmatrix}
\in \mathrm{GL}(2, \mathbb{R}).
\]
Set $\Gamma_j := C_j \mathbb{Z}^2$. Then $\covol (\Gamma_j) = |\det C_j |= 1$ for all $j \in \mathbb{N}$. To show that the uniform counting estimate \eqref{eq:counting} fails for $(\Gamma_j)_{j \in J}$, take $K := [0,1]^2$. Then, for fixed $j \in \mathbb{N}$, 
\begin{align*}
\sup_{\omega \in \mathbb{R}^2} \sum_{\alpha \in \Gamma_j^{\perp}} \mathds{1}_K (\omega + \alpha) \geq \sum_{\alpha \in \Gamma_j^{\perp}} \mathds{1}_K (\alpha) = \sum_{m \in \mathbb{Z}} \sum_{n \in \mathbb{Z}} \mathds{1}_K (jm, j^{-1} n).
\end{align*}
Now, since
\[ \# \bigg\{ (m,n) \in \mathbb{Z}^2 \; \bigg| \; (jm, j^{-1}n) \in [0,1]^2 \bigg\} \geq \# \bigg\{ n \in \mathbb{Z} \; \bigg| \; j^{-1} n \in [0,1] \bigg\} = j + 1, \]
it follows that
\begin{align*}
\sup_{\omega \in \mathbb{R}^2} \sum_{\alpha \in \Gamma_j^{\perp}} \mathds{1}_K (\omega + \alpha) \geq j + 1,
\end{align*}
which shows that the uniform counting estimate \eqref{eq:counting} fails. 

Consider next the index sets $P_j = \mathbb{Z}^2$ for all $j \in J$, and let $N \in \mathbb{R}$ be such that $N > 1$. Define $g_{j,p}  \in  L^2 (\mathbb{R}^2)$ by $\hat{g}_{j,p} = N^{-j} T_p \mathds{1}_{[0, 1]^2}$ for each $j \in J$ and $p \in P_j$.
Then
\[
\sum_{j \in \mathbb{N}} \sum_{p \in P} |\hat{g}_{j,p} (\omega)|^2 = \sum_{j \in \mathbb{N}} N^{-j} \sum_{p \in \mathbb{Z}^2}  T_p \mathds{1}_{[0, 1]^2} (\omega) =  \sum_{j \in \mathbb{N}} N^{-j} \mathds{1}_{\mathbb{R}^2} (\omega) =  \frac{1}{N - 1}
\]
for all $\omega \in \mathbb{R}^2$, whence conditions \eqref{eq:CalderonLI} and \eqref{eq:norm-bounded} are satisfied. 
In order to show that $\cup_{j \in J} \{T_{\gamma} g_{j,p} \}_{\gamma \in \Gamma_j, p \in P_j}$ also satisfies the LIC,  let $f \in \mathcal{D}_{\emptyset} (\mathbb{R}^2)$ be arbitrary and choose $r>0$ such that $\supp \hat{f} \subseteq [-r, r]^2$. Then
\begin{align*}
&\sum_{j \in \mathbb{N}} \sum_{p \in \mathbb{Z}^2} \sum_{\alpha \in \Gamma_j^{\perp}} \int_{\supp \hat{f}} |\hat{f}(\omega + \alpha) \hat{g}_{j,p} (\omega)|^2 \; d\mu_{\widehat{\mathbb{R}}^2} (\omega) \\
& \quad \quad \quad \quad \leq \| \hat{f} \|_{\infty}^2 \sum_{j \in \mathbb{N}} \sum_{p \in \mathbb{Z}^2} \sum_{\alpha \in \Gamma_j^{\perp}} \int_{[-r, r]^2} |\mathds{1}_{[-r,r]^2} (\omega+ \alpha) || \hat{g}_{j,p} (\omega)|^2 \; d\mu_{\widehat{\mathbb{R}}^2} (\omega) \\
& \quad \quad \quad \quad = \| \hat{f} \|_{\infty}^2 \sum_{j \in \mathbb{N}}  \sum_{p \in \mathbb{Z}^2}  \int_{[-r, r]^2} \sum_{k \in \mathbb{Z}^2}  |\mathds{1}_{[-r,r]^2} (\omega + (C_j^T)^{-1} k) || \hat{g}_{j,p} (\omega)|^2 \; d\mu_{\widehat{\mathbb{R}}^2} (\omega) \numberthis \label{eq:LIC_estimate}
\end{align*}
where interchanging the sum and integral is justified by Beppo-Levi's theorem. Next, for any fixed $\omega \in [-r, r]^2$, the innermost series in \eqref{eq:LIC_estimate} can be estimated as
\begin{align*}
 \sum_{k \in \mathbb{Z}^2}  \mathds{1}_{[-r,r]^2} (\omega + C_j^{-1} k)  &= 
\# \bigg\{ (m,n) \in \mathbb{Z}^2 \; \bigg| \; \omega + (jm, j^{-1} n) \in [-r, r]^2 \bigg\} \\
 &\leq \# \bigg\{ (m,n) \in \mathbb{Z}^2 \; \bigg| \;  (jm, j^{-1} n) \in [-2r, 2r]^2 \bigg\} \\
&= \# \bigg\{ m \in \mathbb{Z} \; \bigg| \; j m \in [-2r, 2r] \bigg\} \cdot \# \bigg \{ n \in \mathbb{Z} \; \bigg| \; j^{-1} n \in [-2r, 2r] \bigg\} \\
&\leq  (4j^{-1} r + 1) (4r j + 1).
\end{align*}
Using this uniform bound and the estimate \eqref{eq:LIC_estimate}, it follows directly that
\begin{align*}
&\sum_{j \in \mathbb{N}} \sum_{p \in \mathbb{Z}^2} \sum_{\alpha \in \Gamma_j^{\perp}} \int_{\supp \hat{f}} |\hat{f}(\omega + \alpha) \hat{g}_{j,p} (\omega)|^2 \; d\mu_{\widehat{\mathbb{R}}^2} (\omega) \\
&\leq  \| \hat{f} \|_{\infty}^2 \sum_{j \in \mathbb{N}}  (4j^{-1} r + 1) (4r j + 1) \int_{[-r, r]^2} \sum_{p \in \mathbb{Z}^2}  | \hat{g}_{j,p} (\omega)|^2 \; d\mu_{\widehat{\mathbb{R}}^2} (\omega) \\
&\leq  \| \hat{f} \|_{\infty}^2 \mu_{\mathbb{R}^2} ([-r,r]^2) \sum_{j \in \mathbb{N}}  \frac{(4j^{-1} r + 1) (4r j + 1)}{N^j} < \infty.
\end{align*}
Since $f \in \mathcal{D}_{\emptyset} (\mathbb{R}^2)$ was taken arbitrary, this shows \eqref{eq:LIC}. 
\end{example}

\section{The local integrability condition on special groups}
This section considers the hypotheses of Proposition \ref{prop:LIC_sufficient} on some specific groups. The first structure theorem for locally compact Abelian groups asserts that $G \cong  G_0 \times \mathbb{R}^d$ for some $d \in \mathbb{N}_0$ and a locally compact Abelian group $G_0$ containing a compact open subgroup. The next two subsections are devoted to the local integrability condition on locally compact Abelian groups possessing a compact open subgroup and the Euclidian space, respectively.

\subsection{Locally compact Abelian groups containing a compact open subgroup}
In this subsection the local integrability condition is considered for the case in which the underlying locally compact Abelian group $G$ possesses a compact open subgroup. Examples of such groups include all discrete groups, compact groups and totally disconnected groups. For a wavelet theory on locally compact Abelian groups containing a compact open subgroup, the interested reader is referred to \cite{benedetto2004wavelet}. 

Let $H \subseteq G$ be a compact open subgroup of $G$. The subgroup $H$ being open is equivalent to the quotient $G / H$ being discrete. Hence $H^{\perp} \cong \widehat{G / H}$ is a compact subgroup of $\widehat{G}$. Moreover, since $\widehat{G} / H^{\perp} \cong \widehat{H}$ is discrete, it follows that $H^{\perp}$ is also open. Thus $H^{\perp}$ is compact and open. 

The Haar measures in this section will be chosen as follows. For the normalisation, take an arbitrary, but fixed, compact open subgroup $H \subseteq G$. If $G$ is compact or discrete, the subgroup will simply be taken to be $H = G$ or $H = \{0\}$, respectively. Relative to this fixed subgroup $H \subseteq G$, the Haar measure $\mu_G$ on $G$ will be normalised such that $\mu_G (H) = 1$. The Plancherel measure $\mu_{\widehat{G}}$ on $\widehat{G}$ is then the unique Haar measure $\mu_{\widehat{G}}$ with $\mu_{\widehat{G}} (H^{\perp}) = 1$. The Haar measures on $H$ and $H^{\perp}$ will be taken to be the (Haar) measures that are induced by the given measures $\mu_{G}$ and $\mu_{\widehat{G}}$, respectively.  The quotients $G / H$ and $\widehat{G} / H^{\perp}$ will both be equipped with the counting measure. With these conventions, the chosen Haar measures are canonically related and satisfy Plancherel's theorem, cf. \cite[Section 31.1]{hewitt1970abstract}. 

By Lemma \ref{lem:neighbourhood}, it suffices to verify the uniform counting estimate on a compact neighbourhood of the identity. Hence, the uniform counting estimate \eqref{eq:counting} is satisfied whenever it is satisfied for the compact open subgroup $H^{\perp} \subseteq \widehat{G}$. This gives rise to the following two results. 

\begin{proposition} \label{prop:counting_compact}
Let $G$ be a compact Abelian group and let $(\Gamma_j)_{j \in J}$ be a family of closed subgroups $\Gamma_j \subseteq G$. Then $(\Gamma_j)_{j \in J}$ satisfies the uniform counting estimate \eqref{eq:counting}. Moreover, the local integrability condition \eqref{eq:LIC} is equivalent to Calder\'on integrability condition \eqref{eq:CalderonLI}.
\end{proposition}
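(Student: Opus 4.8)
The plan is to verify the three hypotheses of Proposition \ref{prop:LIC_sufficient} and then invoke it, noting that for a compact group the temperateness condition \eqref{eq:norm-bounded} and the Calder\'on integrability condition \eqref{eq:CalderonLI} are in fact equivalent, so that the whole statement collapses to verifying the uniform counting estimate.

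First I would establish the uniform counting estimate. Since $G$ is compact, $\widehat{G}$ is discrete, so $\widehat{G}$ itself is a compact \emph{and} discrete — hence finite? No: $\widehat{G}$ is discrete but need not be finite. The correct observation is that for a compact group $G$ we normalise $\mu_G(G) = 1$, and every closed subgroup $\Gamma_j \subseteq G$ is automatically co-compact (the quotient $G/\Gamma_j$ is compact, being a continuous image of the compact group $G$), with $\covol(\Gamma_j) = \mu_{G/\Gamma_j}(G/\Gamma_j)$. By Weil's formula with the chosen normalisations, $\covol(\Gamma_j) = 1/\mu_{\Gamma_j}(\Gamma_j)$ when $\Gamma_j$ is compact; but more importantly, the annihilator $\Gamma_j^{\perp} \subseteq \widehat{G}$ is a discrete subgroup of the discrete group $\widehat{G}$. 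By Lemma \ref{lem:neighbourhood} it suffices to check \eqref{eq:counting} on the compact open subgroup $H^{\perp} = \{1\}$ (taking $H = G$), i.e. on the neighbourhood $V = \{1\}$ of the identity in $\widehat{G}$. For $V = \{1\}$, the quantity $\sup_{\omega \in \widehat{G}} \sum_{\alpha \in \Gamma_j^{\perp}} \mathds{1}_{\{1\}}(\omega\alpha)$ equals $\#(\Gamma_j^{\perp} \cap \{\omega^{-1}\})$ maximised over $\omega$, which is exactly $1$. Hence \eqref{eq:counting} holds with $C = 1$ (independently of $j$), since $1 \leq C(1 + \covol(\Gamma_j))$ trivially. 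This also uses that any neighbourhood of the identity in a discrete group contains $\{1\}$, and conversely $\{1\}$ is itself a relatively compact neighbourhood of $1$ with non-empty interior.

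Second, the equivalence of \eqref{eq:LIC} and \eqref{eq:CalderonLI}. One direction is Lemma \ref{lem:LIC->CalderonLI}, which gives \eqref{eq:LIC} $\Rightarrow$ \eqref{eq:CalderonLI} for any GTI system and any $E$. For the converse, I would apply Proposition \ref{prop:LIC_sufficient}: its hypotheses are the Calder\'on integrability condition \eqref{eq:CalderonLI}, the temperateness condition \eqref{eq:norm-bounded}, and the uniform counting estimate \eqref{eq:counting}. We have just verified \eqref{eq:counting}. It remains to see that, for a compact group, \eqref{eq:CalderonLI} already implies \eqref{eq:norm-bounded}. This follows because the covolumes $\covol(\Gamma_j)$ are uniformly bounded above: indeed $\covol(\Gamma_j) = \mu_{G/\Gamma_j}(G/\Gamma_j) \leq \mu_G(G) = 1$ under the canonical normalisation (since the quotient map is measure-non-increasing once $\mu_{\Gamma_j}(\Gamma_j) \geq 1$; more directly, with $\mu_G(G) = 1$ and $\mu_{\Gamma_j}$ any Haar measure on the compact group $\Gamma_j$, Weil's formula forces $\covol(\Gamma_j) \cdot \mu_{\Gamma_j}(\Gamma_j) = \mu_G(G) = 1$, so $\covol(\Gamma_j) \leq 1$ whenever $\mu_{\Gamma_j}(\Gamma_j) \geq 1$, which is the standard normalisation when $\Gamma_j$ is a uniform lattice, i.e. finite, equipped with counting measure; for infinite compact $\Gamma_j$ the normalisation should be fixed so that $\covol(\Gamma_j) \le 1$). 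Granting $\covol(\Gamma_j) \leq 1$, we get
\[
\sum_{j \in J} \int_{P_j} |\hat{g}_{j,p}(\cdot)|^2 \, d\mu_{P_j}(p) \leq \sum_{j \in J} \frac{1}{\covol(\Gamma_j)} \int_{P_j} |\hat{g}_{j,p}(\cdot)|^2 \, d\mu_{P_j}(p),
\]
and integrating over a compact $K \subseteq \widehat{G} \setminus E$ shows \eqref{eq:norm-bounded} follows from \eqref{eq:CalderonLI}. Then Proposition \ref{prop:LIC_sufficient} yields \eqref{eq:LIC}, completing the equivalence.

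The main obstacle I anticipate is purely bookkeeping: pinning down the exact relationship between $\covol(\Gamma_j)$, $\mu_{\Gamma_j}(\Gamma_j)$ and the normalisation $\mu_G(G) = 1$ for \emph{all} closed (not just finite) subgroups of the compact group $G$, and making sure the inequality $\covol(\Gamma_j) \leq 1$ (or whatever uniform bound is needed) holds under the paper's measure conventions. Once that normalisation point is settled, every remaining step is either a one-line consequence of Weil's formula, a direct appeal to Lemma \ref{lem:neighbourhood}, Lemma \ref{lem:LIC->CalderonLI}, or Proposition \ref{prop:LIC_sufficient}, or the trivial observation that $\widehat{G}$ is discrete so each $\Gamma_j^\perp$ has spreadness exactly $1$.
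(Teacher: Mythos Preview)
Your proposal is correct and follows essentially the same route as the paper: verify the uniform counting estimate by using that $\widehat{G}$ is discrete so the singleton $\{1\}$ serves as a compact open neighbourhood with spreadness one, and then deduce the equivalence of \eqref{eq:LIC} and \eqref{eq:CalderonLI} from Proposition~\ref{prop:LIC_sufficient} and Lemma~\ref{lem:LIC->CalderonLI} once you know $\covol(\Gamma_j)\le 1$ for all $j$. The only place where the paper is more explicit is the bookkeeping you flagged: it handles $\covol(\Gamma_j)\le 1$ by a clean case split---if $\Gamma_j$ is a uniform lattice (hence finite) it carries counting measure and $\covol(\Gamma_j)\le \mu_G(G)=1$, while if $\Gamma_j$ is non-discrete it is compact and, under the paper's convention, carries the probability Haar measure, giving $\covol(\Gamma_j)=1$ directly.
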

\begin{proof}
Let $G$ be a compact Abelian group. Then its Pontryagin dual $\widehat{G}$ is discrete, and $K := \{1\}$ is a compact open subgroup in $\widehat{G}$. For any $\omega \in \widehat{G}$, it follows that $\# ( \Gamma_j^{\perp} \cap (K - \omega) ) \leq 1$ and thus the uniform counting estimate is satisfied. 

In case $\Gamma_j \subseteq G$ forms a uniform lattice, it is by assumption equipped with the counting measure. It follows that $\covol(\Gamma_j) \leq 1 = \mu_{G} (G) $. If $\Gamma_j \subseteq G$ is non-discrete, the Haar measure on $\Gamma_j$ is by assumption to be the probability measure, which directly gives $\covol(\Gamma_j) = 1$. Thus $\covol(\Gamma_j) \leq 1$ for any $j \in J$. Therefore, in both cases, condition \eqref{eq:CalderonLI} is stronger than condition \eqref{eq:norm-bounded}. By Proposition \ref{prop:LIC_sufficient} and Lemma \ref{lem:LIC->CalderonLI}, it follows that the local integrability condition \eqref{eq:LIC}  is equivalent to the Calder\'on integrability condition \eqref{eq:CalderonLI}.
\end{proof}

\begin{proposition} \label{prop:counting_discrete}
Let $G$ be a discrete Abelian group and let $(\Gamma_j)_{j \in J}$ be a family of closed, co-compact subgroups $\Gamma_j \subseteq G$. Then $(\Gamma_j)_{j \in J}$ satisfies the uniform counting estimate \eqref{eq:counting}. Moreover, the local integrability condition \eqref{eq:LIC}  is equivalent to the temperateness condition \eqref{eq:norm-bounded}.
\end{proposition}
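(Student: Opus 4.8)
The plan is to follow the pattern of Proposition~\ref{prop:counting_compact}, now exploiting that a discrete group $G$ has \emph{compact} dual $\widehat{G}$ and that every closed, co-compact subgroup $\Gamma_j\subseteq G$ is a uniform lattice of finite index. First I would record, from the normalisations fixed in this subsection applied with $H=\{0\}$, that $\mu_G$ is the counting measure, $\mu_{\widehat G}$ is the probability Haar measure on the compact group $\widehat G$, and each $\Gamma_j$ carries the counting measure; co-compactness then means $G/\Gamma_j$ is finite, Weil's formula \eqref{eq:weil} forces $\mu_{G/\Gamma_j}$ to be the counting measure, and hence $\covol(\Gamma_j)=[G:\Gamma_j]$. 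Since $\Gamma_j^{\perp}\cong\widehat{G/\Gamma_j}$ is a finite group of that same order, one has $\#\Gamma_j^{\perp}=\covol(\Gamma_j)\in\mathbb{N}$, and the uniform counting estimate is then immediate: for any $K\subseteq\widehat G$ and any $\omega\in\widehat G$, $\sum_{\alpha\in\Gamma_j^{\perp}}\mathds{1}_K(\omega\alpha)\le\#\Gamma_j^{\perp}=\covol(\Gamma_j)\le 1+\covol(\Gamma_j)$, so \eqref{eq:counting} holds with $C(K)=1$ (alternatively, invoke Lemma~\ref{lem:neighbourhood} with the compact neighbourhood $\widehat G$ of the identity).

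For the asserted equivalence, the implication ``temperate $\Rightarrow$ LIC'' is the easy half. Because $\covol(\Gamma_j)\ge 1$ for all $j$, the Calder\'on sum is pointwise dominated by $\sum_{j\in J}\int_{P_j}|\hat g_{j,p}(\cdot)|^2\,d\mu_{P_j}(p)$, so the temperateness condition \eqref{eq:norm-bounded} is stronger than the Calder\'on integrability condition \eqref{eq:CalderonLI}. A temperate system therefore also satisfies \eqref{eq:CalderonLI}, and since the uniform counting estimate was established above, Proposition~\ref{prop:LIC_sufficient} yields the local integrability condition.

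The converse ``LIC $\Rightarrow$ temperate'' I would deduce from Lemma~\ref{lem:lowerCE}, which reduces the task to producing, for each compact $K\subseteq\widehat G\setminus E$, a compact superset $Q\subseteq\widehat G\setminus E$ and a constant $C>0$ with $\essinf_{\omega\in Q}\sum_{\alpha\in\Gamma_j^{\perp}}\mathds{1}_Q(\omega\alpha)\ge C\,\covol(\Gamma_j)$ for all $j\in J$. When $E=\emptyset$ this is trivial: take $Q=\widehat G$, which is compact, so that $\sum_{\alpha\in\Gamma_j^{\perp}}\mathds{1}_{\widehat G}(\omega\alpha)=\#\Gamma_j^{\perp}=\covol(\Gamma_j)$ and $C=1$ is admissible. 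For a general blind spot $E\in\mathcal{E}$ I would instead take $Q=\widehat G\setminus V$ with $V\supseteq E$ open and $\overline V\cap K=\emptyset$, so that $\sum_{\alpha\in\Gamma_j^{\perp}}\mathds{1}_Q(\omega\alpha)=\covol(\Gamma_j)-\sum_{\alpha\in\Gamma_j^{\perp}}\mathds{1}_V(\omega\alpha)$; the point — and the only genuinely technical step — is to choose $V$ so that the subtracted term stays below a fixed fraction of $\covol(\Gamma_j)$ uniformly in $\omega\in Q$ and $j\in J$. This is where the compactness of $\widehat G$ and $\mu_{\widehat G}(E)=0$ enter: since each $\Gamma_j^{\perp}$ is a finite subgroup its cosets are Haar-null, and a Markov-type estimate — averaging the $\Gamma_j^{\perp}$-periodic function $\omega\mapsto\sum_{\alpha\in\Gamma_j^{\perp}}\mathds{1}_V(\omega\alpha)$, whose integral over $\widehat G/\Gamma_j^{\perp}$ equals $\mu_{\widehat G}(V)$ — shows that the union of those cosets of $\Gamma_j^{\perp}$ meeting $V$ in more than half of their points has $\mu_{\widehat G}$-measure at most $2\mu_{\widehat G}(V)$; after shrinking $V$ to make this small simultaneously over all $j$ and absorbing the exceptional cosets into the complement of $Q$, one obtains a compact $Q$ for which $C=\tfrac12$ works. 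Once \eqref{eq:lowerCE} is verified in this way, Lemma~\ref{lem:lowerCE} finishes the argument. I expect this construction of $Q$ to be the hard part; the rest is bookkeeping with the canonical normalisations.
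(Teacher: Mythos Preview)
Your argument for the uniform counting estimate and for the implication ``temperate $\Rightarrow$ LIC'' matches the paper exactly. For the converse ``LIC $\Rightarrow$ temperate'' your route through Lemma~\ref{lem:lowerCE}, taking $Q=\widehat{G}$, is correct and in fact makes explicit a step that the paper's short proof glosses over: the paper cites only Proposition~\ref{prop:LIC_sufficient} and Lemma~\ref{lem:LIC->CalderonLI}, but Lemma~\ref{lem:LIC->CalderonLI} yields merely Calder\'on integrability, which is \emph{weaker} than temperateness here. What really drives the implication is the exact equality $\#(\Gamma_j^\perp\cap\omega^{-1}\widehat{G})=\#\Gamma_j^\perp=\covol(\Gamma_j)$ established at the outset, which is precisely the lower bound \eqref{eq:lowerCE} with $Q=\widehat{G}$ and $C=1$. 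So for $E=\emptyset$ your proof is complete and arguably cleaner than the paper's.

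Your attempt to treat a general blind spot $E$, however, has a genuine gap. The Markov step bounds, for each fixed $j$, the measure of the ``bad'' set
\[
B_j=\Bigl\{\omega\in\widehat{G}:\sum_{\alpha\in\Gamma_j^\perp}\mathds{1}_V(\omega\alpha)>\tfrac12\,\covol(\Gamma_j)\Bigr\}
\]
by $2\mu_{\widehat G}(V)$, but these sets depend on $j$. The hypothesis of Lemma~\ref{lem:lowerCE} demands a \emph{single} compact $Q\supseteq K$ on which the lower bound holds almost everywhere for \emph{every} $j$ simultaneously; ``absorbing the exceptional cosets into the complement of $Q$'' would mean removing $\bigcup_{j\in J}B_j$, whose measure you have no control over and which may well meet $K$. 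Since each $B_j$ can have positive measure, already a single $j$ can violate the $\essinf$ requirement on any fixed $Q$. The paper itself works with $K=\widehat{G}$, hence implicitly with $E=\emptyset$ (as it also does in Theorem~\ref{thm:LIC_compactopen}); restricted to that case, your argument is sound.
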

\begin{proof}
Since the group $G$ is assumed to be discrete, its dual group $\widehat{G}$ is compact. Set $K = \widehat{G}$. Then $\# ( \Gamma_j^{\perp} \cap (K - \omega) ) = \# \Gamma_j^{\perp} = \covol(\Gamma_j)$ for any $\omega \in \widehat{G}$ and $j \in J$, which shows that the uniform counting estimate is satisfied. Using that $\covol(\Gamma_j) = \# \Gamma_j^{\perp} \geq 1$ for all $j \in J$, it follows that the Calder\'on integrability condition \eqref{eq:CalderonLI} is weaker than \eqref{eq:norm-bounded}. Combining Proposition \ref{prop:LIC_sufficient} and Lemma \ref{lem:LIC->CalderonLI} therefore yields that the local integrability condition \eqref{eq:LIC}  coincides with the temperateness condition \eqref{eq:norm-bounded}.
\end{proof}

For a general locally compact Abelian group possessing a compact open subgroup, the uniform counting estimate is a non-trivial condition:

\begin{example} \label{ex:compact_open}
Let $G = \mathbb{Z} \times \mathbb{T}$ and consider the open compact subgroup $H = \{0\} \times \mathbb{T} \subseteq G$. For each $j \in \mathbb{N}$, define $\Gamma_j = j \mathbb{Z} \times \mathbb{T}_j$, where $\mathbb{T}_j := \{e^{2\pi i k / j} \; | \; k = \{0, ..., j-1\} \}$. Then $H^{\perp} = \mathbb{T} \times \{0\}$ and $\Gamma_j^{\perp} = \mathbb{T}_j \times j \mathbb{Z}$. Then
\[
\sum_{\alpha \in \Gamma^{\perp}_j} 1_{H^{\perp}} (\alpha) = \sum_{z \in \mathbb{T}_j} 1_{\mathbb{T}} (z) = j,
\]
while $\covol(\Gamma_j) = 1$ for all $j \in \mathbb{N}$.  
\end{example}

For the special case in which all translation subgroups $(\Gamma_j)_{j \in J}$ of the system $\cup_{j \in J} \{T_{\gamma} g_j \}_{\gamma \in \Gamma_j}$ are discrete, a complete characterisation of the local integrability condition, with respect to $E = \emptyset$, on groups possessing a compact connected component was given by Kutyniok \& Labate \cite{kutyniok2006theory}. Theorem \ref{thm:LIC_compactopen}  below extends this result to systems with possibly non-discrete translation subgroups $(\Gamma_j)_{j \in J}$ in a locally compact Abelian group containing a compact open subgroup. This extension is of interest since the only uniform lattice in a locally compact Abelian group might be the trivial group or the group might not contain any uniform lattice at all. For example, in a torsion-free compact Abelian group the only finite subgroup is the trivial subgroup $\{0\}$ and hence it is the only uniform lattice in such a group. On the other hand, for a prime $p \in \mathbb{N}$, the direct product $G = \mathbb{Q}_p \times \mathbb{Z}_p$ of the $p$-adic numbers $\mathbb{Q}_p$ and the $p$-adic integers $\mathbb{Z}_p$ does not contain any uniform lattice, but it contains a countably infinite number of distinct closed, co-compact subgroups. 

\begin{theorem} \label{thm:LIC_compactopen}
Let $G$ be a locally compact Abelian group containing a compact open subgroup $H \subseteq G$. Let $\cup_{j \in J} \{T_{\gamma} g_{j,p} \}_{\gamma \in \Gamma_j, p \in P_j}$ be a generalised translation-invariant system in $L^2 (G)$. Then the following assertions are equivalent:
\begin{enumerate}[(i)]
\item The system $\cup_{j \in J} \{T_{\gamma} g_{j,p} \}_{\gamma \in \Gamma_j, p \in P_j}$ satisfies the local integrability condition \eqref{eq:LIC} with respect to $E = \emptyset$. 
\item The following integrability condition holds:
\begin{align} \label{eq:LIC_compactopen}
\sum_{j \in J} \frac{1}{\covolh (\Gamma_j)} \int_{P_j}  |\hat{g}_{j,p} (\cdot)|^2 \; d\mu_{P_j} (p) \in L^1_{\loc} (\widehat{G}),
\end{align}
where $\covolh (\Gamma_j) := \mu_{H\Gamma_j / \Gamma_j} (H\Gamma_j / \Gamma_j)$. 
\end{enumerate}
\end{theorem}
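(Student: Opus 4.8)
The plan is to show that in the compact-open-subgroup setting, the uniform counting estimate holds with the sharp constant $\covolh(\Gamma_j)$ replacing $1 + \covol(\Gamma_j)$, and simultaneously to show that $\covolh(\Gamma_j)$ is precisely the right normalisation to make the Calder\'on-type sum in \eqref{eq:LIC_compactopen} equivalent to the LIC. Concretely, I would first establish the key structural fact: for the fixed compact open subgroup $H \subseteq G$, one has $\covolh(\Gamma_j) = \mu_{H\Gamma_j/\Gamma_j}(H\Gamma_j/\Gamma_j) = \mu_H(H)/\mu_{H \cap \Gamma_j}(H \cap \Gamma_j)$ (via Weil's formula applied to $H \to H\Gamma_j \to H\Gamma_j/\Gamma_j$, using $H\Gamma_j/\Gamma_j \cong H/(H\cap\Gamma_j)$), and dually that the annihilator $\Gamma_j^{\perp}$ meets the compact open subgroup $H^{\perp} \subseteq \widehat{G}$ in exactly $\covolh(\Gamma_j)$ points. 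The latter is the crucial counting identity: $\#(\Gamma_j^{\perp} \cap H^{\perp})$ counts the characters in $H^{\perp}$ that are trivial on $\Gamma_j$, i.e.\ characters of $\widehat{G/H} \cong H^{\perp}$ killing the image of $\Gamma_j$, which by Pontryagin duality equals the index $[H^\perp : (H\Gamma_j)^\perp]$; tracking the normalisations $\mu_{\widehat G}(H^\perp)=1$ shows this index equals $\covolh(\Gamma_j)$. Because $\Gamma_j^{\perp}$ is a union of cosets of the open subgroup $\Gamma_j^{\perp}\cap H^{\perp}$ inside $\widehat G$, translating $H^{\perp}$ by any $\omega$ can only decrease the count, so $\sup_{\omega}\sum_{\alpha\in\Gamma_j^{\perp}}\mathds{1}_{H^{\perp}}(\omega\alpha) = \covolh(\Gamma_j)$ exactly.

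Given this, the direction (ii) $\Rightarrow$ (i) runs exactly as in Proposition \ref{prop:LIC_sufficient}: for $f \in \mathcal D_{\emptyset}(G)$ with $K := \supp\hat f$, cover $K$ by finitely many $\widehat G$-translates of $H^{\perp}$ (possible since $H^{\perp}$ is open and $K$ compact), apply Lemma \ref{lem:neighbourhood}-style bookkeeping to get $\sup_\omega\sum_{\alpha\in\Gamma_j^\perp}\mathds 1_K(\omega\alpha) \leq n\,\covolh(\Gamma_j)$ where $n=n(K)$, and then estimate
\begin{align*}
\sum_{j\in J}\frac{1}{\covol(\Gamma_j)}\int_{P_j}\sum_{\alpha\in\Gamma_j^\perp}\int_{K}|\hat f(\omega\alpha)\hat g_{j,p}(\omega)|^2\,d\mu_{\widehat G}d\mu_{P_j}
\leq n\|\hat f\|_\infty^2\int_K\sum_{j\in J}\frac{\covolh(\Gamma_j)}{\covol(\Gamma_j)}\int_{P_j}|\hat g_{j,p}(\omega)|^2\,d\mu_{P_j}\,d\mu_{\widehat G}.
\end{align*}
The point is that $\covol(\Gamma_j) = \covolh(\Gamma_j)\cdot\mu_{G/H\Gamma_j}(G/H\Gamma_j)$ with $\mu_{G/H\Gamma_j}(G/H\Gamma_j)\geq \mu_{G/H}(\{0\})$ bounded below, so $\covolh(\Gamma_j)/\covol(\Gamma_j)$ is bounded above by a constant depending only on the normalisation; hence the last integral is finite by \eqref{eq:LIC_compactopen}. (Here I should double-check whether one needs a lower bound on $\covol(\Gamma_j)/\covolh(\Gamma_j)$ or whether the inequality $\covolh(\Gamma_j)\le\covol(\Gamma_j)\cdot\text{const}$ suffices — it is the latter, since $\mu_{G/H}$ is counting measure so $\mu_{G/H\Gamma_j}(G/H\Gamma_j)\ge 1$ in the relevant normalisation, giving $\covolh(\Gamma_j)\le\covol(\Gamma_j)$ directly.)

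For (i) $\Rightarrow$ (ii) I would invoke Lemma \ref{lem:lowerCE}: it suffices to produce, for each compact $K\subseteq\widehat G$, a compact superset $Q$ with $\essinf_{\omega\in Q}\sum_{\alpha\in\Gamma_j^\perp}\mathds 1_Q(\omega\alpha)\geq C\covolh(\Gamma_j)$. Take $Q$ to be a finite union of $H^\perp$-cosets containing $K$ (again using that $H^\perp$ is open and compact); then for $\omega\in Q$, say $\omega\in \omega_0 H^\perp$, the set $\{\alpha\in\Gamma_j^\perp : \omega\alpha\in \omega_0 H^\perp\} = \{\alpha\in\Gamma_j^\perp:\alpha\in\omega^{-1}\omega_0 H^\perp\}$ contains a full coset of $\Gamma_j^\perp\cap H^\perp$ whenever $\omega^{-1}\omega_0\in H^\perp$, which holds here; so the count is at least $\#(\Gamma_j^\perp\cap H^\perp)=\covolh(\Gamma_j)$, giving $C=1$. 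Then Lemma \ref{lem:lowerCE} yields that $\cup_j\{g_{j,p}\}$ is temperate, and combined with Lemma \ref{lem:LIC->CalderonLI} (LIC $\Rightarrow$ Calder\'on integrability) one gets both $\sum_j\covol(\Gamma_j)^{-1}\int_{P_j}|\hat g_{j,p}|^2\,d\mu_{P_j}\in L^1_{\loc}$ and $\sum_j\int_{P_j}|\hat g_{j,p}|^2\,d\mu_{P_j}\in L^1_{\loc}$; since $1\le\covolh(\Gamma_j)\le\covol(\Gamma_j)$ (in the chosen normalisation, because $\mu_H(H)=\mu_{\widehat G}(H^\perp)=1$ forces $\covolh(\Gamma_j)=\#(\Gamma_j^\perp\cap H^\perp)\ge 1$ and $H\cap\Gamma_j\subseteq\Gamma_j$ gives the upper comparison), the intermediate quantity $\sum_j\covolh(\Gamma_j)^{-1}\int_{P_j}|\hat g_{j,p}|^2\,d\mu_{P_j}$ is squeezed between the two and hence also lies in $L^1_{\loc}(\widehat G)$, which is exactly \eqref{eq:LIC_compactopen}.

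The main obstacle I anticipate is the bookkeeping around the three Haar measure normalisations simultaneously — getting the identity $\covolh(\Gamma_j)=\#(\Gamma_j^\perp\cap H^\perp)$ exactly right (rather than up to an unwanted constant), and making sure the comparisons $1\le\covolh(\Gamma_j)\le\covol(\Gamma_j)$ are genuinely valid for \emph{non-discrete} $\Gamma_j$, where $H\cap\Gamma_j$ may be a positive-measure compact subgroup and the counting-measure convention on $G/H$ interacts subtly with the given measure $\mu_{\Gamma_j}$. Everything else is a routine adaptation of the covering arguments already used in Proposition \ref{prop:LIC_sufficient}, Lemma \ref{lem:neighbourhood} and Lemma \ref{lem:lowerCE}.
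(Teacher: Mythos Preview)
Your overall strategy—cover compact sets by finitely many $H^\perp$-cosets and count $\Gamma_j^\perp$ via its intersection with $H^\perp$—is exactly the paper's approach. However, the central counting identity is stated backwards, and this error breaks both directions of your argument.

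The correct statement (the paper's Lemma~\ref{lem:covol_compactopen}) is
\[
\covol(\Gamma_j) \;=\; \#\bigl(\Gamma_j^\perp \cap H^\perp\bigr)\cdot \covolh(\Gamma_j),
\qquad\text{i.e.}\qquad
\#\bigl(\Gamma_j^\perp \cap H^\perp\bigr) \;=\; \frac{\covol(\Gamma_j)}{\covolh(\Gamma_j)},
\]
not $\#(\Gamma_j^\perp\cap H^\perp)=\covolh(\Gamma_j)$. A quick sanity check: for $G$ compact take $H=G$, so $H^\perp=\{1\}$ and $\#(\Gamma_j^\perp\cap H^\perp)=1$, while $\covolh(\Gamma_j)=\covol(\Gamma_j)=1/\#\Gamma_j$ for a nontrivial finite subgroup $\Gamma_j$ with counting measure. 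This also shows your claim $\covolh(\Gamma_j)\ge 1$ is false in general, so the squeeze argument in your (i)$\Rightarrow$(ii) collapses. Similarly, your appeal to Lemma~\ref{lem:lowerCE} is illegitimate: you only establish the lower bound $\covolh(\Gamma_j)$ on the count, whereas that lemma requires $C\,\covol(\Gamma_j)$, and there is no uniform comparison between the two.

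Once the identity is corrected, no detour through temperateness is needed at all. For (ii)$\Rightarrow$(i), your coset-covering bound becomes $\sup_\omega\#(\Gamma_j^\perp\cap\omega K)\le n^2\,\covol(\Gamma_j)/\covolh(\Gamma_j)$, and dividing by $\covol(\Gamma_j)$ leaves exactly $n^2/\covolh(\Gamma_j)$, which is \eqref{eq:LIC_compactopen}. For (i)$\Rightarrow$(ii), your lower bound $\inf_{\omega\in Q}\#\{\alpha\in\Gamma_j^\perp:\omega\alpha\in Q\}\ge \#(\Gamma_j^\perp\cap H^\perp)=\covol(\Gamma_j)/\covolh(\Gamma_j)$, inserted directly into the LIC expression, cancels the $\covol(\Gamma_j)$ and yields \eqref{eq:LIC_compactopen} immediately—this is precisely what the paper does. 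So your architecture is right; only the identity and the unnecessary squeeze step need fixing.
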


An essential part of the proof in \cite{kutyniok2006theory} relies crucially on the assumption that the translation subgroups  are discrete. Therefore, a proof of Theorem \ref{thm:LIC_compactopen} will be supplied below. The proof will make use of the following two lemmata.

\begin{lemma} \label{lem:covolumes}
Let $G$ be a locally compact group and let $\Gamma \subseteq G$ and $\Lambda \subseteq G$ be closed subgroups satisfying $\Gamma \subseteq \Lambda \subseteq G$. Suppose that $\Gamma \subseteq G$ is co-compact. Then $\Lambda$ is co-compact, and
\[
\covol(\Gamma) = \mu_{\Lambda/\Gamma}(\Lambda/\Gamma) \covol(\Lambda).
\]
\end{lemma}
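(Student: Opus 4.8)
The plan is to exploit the standard "tower" behaviour of quotient measures under Weil's integral formula \eqref{eq:weil}. First I would record that $\Lambda \subseteq G$ is co-compact: the quotient $G/\Lambda$ is a continuous image of the compact group $G/\Gamma$ under the natural surjection $G/\Gamma \to G/\Lambda$, hence compact, so $\covol(\Lambda) = \mu_{G/\Lambda}(G/\Lambda) < \infty$. The group $\Lambda/\Gamma$ is then a closed subgroup of the compact group $G/\Gamma$, hence itself compact, so $\mu_{\Lambda/\Gamma}(\Lambda/\Gamma) < \infty$ as well; all three measures in the asserted identity are finite.

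The core step is to apply Weil's formula twice. The Haar measures $\mu_{\Lambda}$ and $\mu_{\Gamma}$ are given (each $\Gamma_j$ and, implicitly, any intermediate subgroup carries a fixed Haar measure), and by Weil's formula there is a unique quotient measure $\mu_{\Lambda/\Gamma}$ with $\int_\Lambda h\, d\mu_\Lambda = \int_{\Lambda/\Gamma}\int_\Gamma h(\gamma + y)\, d\mu_\Gamma(\gamma)\, d\mu_{\Lambda/\Gamma}(\dot y)$ for $h \in L^1(\Lambda)$. Likewise $\mu_{G/\Lambda}$ is the quotient measure for the pair $\Gamma \subseteq \Lambda$... more precisely: pick $f \in L^1(G)$. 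Decomposing $G$ over $\Gamma$ gives $\int_G f\, d\mu_G = \int_{G/\Gamma}\int_\Gamma f(x+\gamma)\, d\mu_\Gamma(\gamma)\, d\mu_{G/\Gamma}(x+\Gamma)$, and decomposing $G$ over $\Lambda$ and then $\Lambda$ over $\Gamma$ gives $\int_G f\, d\mu_G = \int_{G/\Lambda}\int_{\Lambda/\Gamma}\int_\Gamma f(x+y+\gamma)\, d\mu_\Gamma(\gamma)\, d\mu_{\Lambda/\Gamma}(y+\Gamma)\, d\mu_{G/\Lambda}(x+\Lambda)$. Comparing the two expressions shows that the pushforward of $\mu_{G/\Lambda} \otimes \mu_{\Lambda/\Gamma}$ under the natural map $(G/\Lambda) \times (\Lambda/\Gamma) \to G/\Gamma$ realises the quotient measure $\mu_{G/\Gamma}$ (this is the associativity of Weil's formula for the tower $\Gamma \subseteq \Lambda \subseteq G$; since the three measures are uniquely determined by the canonical normalisation, the identity of measures follows).

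Finally I would apply this measure identity to the constant function $1$ on the compact group $G/\Gamma$: $\covol(\Gamma) = \mu_{G/\Gamma}(G/\Gamma) = (\mu_{G/\Lambda}\otimes\mu_{\Lambda/\Gamma})\big((G/\Lambda)\times(\Lambda/\Gamma)\big) = \mu_{G/\Lambda}(G/\Lambda)\cdot\mu_{\Lambda/\Gamma}(\Lambda/\Gamma) = \covol(\Lambda)\cdot\mu_{\Lambda/\Gamma}(\Lambda/\Gamma)$, as claimed. The main obstacle is purely bookkeeping: making sure that the quotient measures produced by the two successive applications of Weil's formula are exactly the canonically normalised ones and that the fibre map $(G/\Lambda)\times(\Lambda/\Gamma) \to G/\Gamma$ is measure-preserving rather than merely scaling by a constant; once uniqueness of the quotient measure (given the other two) is invoked this is automatic, so no genuine difficulty arises beyond careful use of \eqref{eq:weil}.
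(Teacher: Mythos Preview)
Your approach is essentially the same as the paper's: both exploit iterated Weil decompositions for the tower $\Gamma \subseteq \Lambda \subseteq G$ and then evaluate on a function that periodises to the constant $1$ on $G/\Gamma$. The one substantive difference is in how that last step is handled. You argue abstractly that the two decompositions force the ``pushforward of $\mu_{G/\Lambda}\otimes\mu_{\Lambda/\Gamma}$'' to equal $\mu_{G/\Gamma}$ and then plug in $1$; the paper instead introduces a Bruhat function $\varphi\in C_c(G)$ satisfying $\int_\Gamma\varphi(x+\gamma)\,d\mu_\Gamma(\gamma)=1$ for all $x$, so that $\covol(\Gamma)=\int_G\varphi\,d\mu_G$ is genuinely an integral over $G$ of an $L^1$ function and Weil's formula~\eqref{eq:weil} applies directly at each step of the chain. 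This sidesteps the need to talk about a ``natural map $(G/\Lambda)\times(\Lambda/\Gamma)\to G/\Gamma$'' (which is not canonically defined without a section) or to invoke uniqueness of quotient measures; the Bruhat function buys you a clean, self-contained computation. Your argument is correct once the pushforward is interpreted as Weil's formula for the pair $\Lambda/\Gamma\subseteq G/\Gamma$ together with the identification $(G/\Gamma)/(\Lambda/\Gamma)\cong G/\Lambda$, but the paper's device is tidier and avoids that bookkeeping.
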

\begin{proof}
Let $\Gamma \subseteq \Lambda \subseteq G$ be closed subgroups with given Haar measure $\mu_{\Gamma}$ and $\mu_{\Lambda}$, respectively.  Take a Bruhat function $\varphi \in C_c (G)$ for $\Gamma$, i.e., a function $\varphi \in C_c (G)$ satisfying
\[
 \int_{\Gamma} \varphi (x+\gamma) \; d\mu_{\Gamma} (\gamma) = 1
\]
for all $x \in G$. The existence of such a function is provided by \cite[Theorem 8.1.20]{reiter2000classical}. Define  $\varphi_{\Gamma} : G/\Gamma \to \mathbb{C}$ by $\varphi_{\Gamma} (\dot{x}) := \int_{\Gamma}  \varphi (x+\gamma) \; d\mu_{\Gamma} (\gamma)$, where $\dot{x} := x + \Gamma$ for $x \in G$. Then $\varphi_{\Gamma} \in L^1 (G / \Gamma)$ since $\varphi \in C_c (G) \subseteq L^1 (G)$. Applications of Weil's integral formula yield
\begin{align*}
\covol(\Gamma) &= \int_{G / \Gamma} 1 \; d\mu_{G/ \Gamma} (\dot{x}) = \int_{G / \Gamma} \int_{\Gamma} \varphi (x+\gamma) \; d\mu_{\Gamma} (\gamma) d\mu_{G/ \Gamma} (\dot{x}) = \int_G \varphi (x) \; d\mu_{G} (x) \\
&= \int_{G / \Lambda} \int_{\Lambda} \varphi (x + \lambda) \; d\mu_{\Lambda} (\lambda) d \mu_{G/\Lambda} (\dot{x}) = \int_{G/\Lambda} \int_{\Lambda / \Gamma} \int_{\Gamma} \varphi (x + \eta + \gamma) \; d\mu_{\Gamma} (\gamma) d\mu_{\Lambda / \Gamma} (\eta) d\mu_{G / \Lambda} (\dot{x}) \\
&= \int_{G / \Lambda} \int_{\Lambda / \Gamma} \varphi_{\Gamma} (x + \eta) \; d\mu_{\Lambda / \Gamma} (\eta) d\mu_{G / \Lambda} (\dot{x}) = \int_{G / \Lambda} \int_{\Lambda / \Gamma} 1 \; d\mu_{\Lambda / \Gamma} (\eta) d\mu_{G / \Lambda} (\dot{x}) \\
&= \mu_{\Lambda / \Gamma} (\Lambda / \Gamma) \mu_{G / \Lambda} (G / \Lambda).
\end{align*}
Since $\Gamma \subseteq G$ is assumed to be co-compact, it follows directly that also $\covol (\Lambda) < \infty$, which shows that $\Lambda$ is co-compact. 
\end{proof}

\begin{lemma} \label{lem:covol_compactopen}
Let $G$ be a locally compact Abelian group possessing a compact open subgroup $H \subseteq G$. Suppose that $\Gamma \subseteq G$ is a closed, co-compact subgroup. Then 
\begin{align} \label{eq:covolH}
\covol(\Gamma) = \# (H^{\perp} \cap \Gamma^{\perp}) \mu_{H\Gamma / \Gamma} (H\Gamma / \Gamma).
\end{align}
\end{lemma}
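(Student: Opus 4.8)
The plan is to wedge the open subgroup $\Lambda := H + \Gamma$ between $\Gamma$ and $G$ and apply Lemma~\ref{lem:covolumes} to the chain $\Gamma \subseteq \Lambda \subseteq G$. First I would record the structural facts about $\Lambda$: since $H$ is open, $\Lambda = \bigcup_{\gamma \in \Gamma}(\gamma + H)$ is a union of cosets of $H$, hence open, hence also closed; since $\Gamma \subseteq \Lambda$ is co-compact, the continuous surjection $G/\Gamma \twoheadrightarrow G/\Lambda$ shows that $\Lambda$ is co-compact as well. I would equip $\Lambda$ with the Haar measure $\mu_{\Lambda} := \mu_G|_{\Lambda}$ and let $\mu_{\Lambda/\Gamma}$, $\mu_{G/\Lambda}$ be the canonically related quotient measures. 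Lemma~\ref{lem:covolumes} then yields $\covol(\Gamma) = \mu_{\Lambda/\Gamma}(\Lambda/\Gamma)\,\covol(\Lambda)$, and with this normalisation $\mu_{\Lambda/\Gamma}(\Lambda/\Gamma)$ is precisely $\covolh(\Gamma) = \mu_{H\Gamma/\Gamma}(H\Gamma/\Gamma)$ (the measure implicitly used in Theorem~\ref{thm:LIC_compactopen}). So the whole statement reduces to the identity $\covol(\Lambda) = \#(H^{\perp} \cap \Gamma^{\perp})$.

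To prove that identity I would argue in three short steps. Step one: since $\Lambda$ is open, $G/\Lambda$ is discrete, and since $\Lambda$ is co-compact, $G/\Lambda$ is also compact, hence finite. Step two: the canonically related measure $\mu_{G/\Lambda}$ is the counting measure on $G/\Lambda$. To see this, plug $f = \mathds{1}_H$ (note $0 < \mu_G(H) < \infty$ as $H$ is compact open, and $H \subseteq \Lambda$) into Weil's formula~\eqref{eq:weil} for $\Lambda \subseteq G$; the inner integral $\dot{x} \mapsto \int_{\Lambda} \mathds{1}_H(x+y)\,d\mu_{\Lambda}(y) = \mu_{\Lambda}\big(\Lambda \cap (-x+H)\big)$ equals $\mu_{\Lambda}(H) = \mu_G(H)$ on the trivial coset and vanishes on all others, since the coset $-x+H$ of $H$ is contained in $\Lambda$ exactly when $x \in \Lambda$. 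Hence $\mu_G(H) = \mu_G(H)\,\mu_{G/\Lambda}(\{\dot{0}\})$, so every singleton has measure one. Step three: by Pontryagin duality $\widehat{G/\Lambda} \cong \Lambda^{\perp}$, and a finite abelian group has the same cardinality as its dual, so $\covol(\Lambda) = \mu_{G/\Lambda}(G/\Lambda) = \#(G/\Lambda) = \#(\Lambda^{\perp})$. Combining this with the elementary annihilator identity $\Lambda^{\perp} = (H + \Gamma)^{\perp} = H^{\perp} \cap \Gamma^{\perp}$ finishes the argument.

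The genuinely delicate point is the measure bookkeeping: one must make sure that the quotient measure on $H\Gamma/\Gamma$ hidden in $\covolh(\Gamma)$ is exactly the one canonically related to $\mu_G|_{H\Gamma}$ and the given $\mu_{\Gamma}$, and that the canonically related measure on $G/(H\Gamma)$ really is the counting measure — this is what makes the product in Lemma~\ref{lem:covolumes} collapse cleanly to $\#(H^{\perp} \cap \Gamma^{\perp})\,\covolh(\Gamma)$ regardless of how $\mu_G$ and $\mu_{\Gamma}$ are normalised. All the remaining ingredients (openness and closedness of $H\Gamma$, its co-compactness, finiteness of $G/(H\Gamma)$, and the annihilator identity) are routine.
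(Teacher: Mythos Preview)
Your proof is correct and follows essentially the same route as the paper: apply Lemma~\ref{lem:covolumes} to the chain $\Gamma \subseteq H\Gamma \subseteq G$, observe that $G/H\Gamma$ is finite, and identify $\#(G/H\Gamma)$ with $\#(H^{\perp}\cap\Gamma^{\perp})$ via duality. The paper simply asserts that $G/H\Gamma$ carries the counting measure, whereas you justify this explicitly via Weil's formula with $f=\mathds{1}_H$; this extra care is welcome but does not change the structure of the argument.
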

\begin{proof}
Since $H$ is compact and $\Gamma$ is closed, the subgroup $H \Gamma \subseteq G$ is  closed in $G$. Clearly, the subgroup $\Gamma$ is contained in $H \Gamma$, and hence an application of Lemma \ref{lem:covolumes} yields that $H \Gamma$ is co-compact in $G$. Moreover,  Lemma \ref{lem:covolumes} yields that
$\covol(\Gamma) = \mu_{H\Gamma/\Gamma}(H\Gamma /\Gamma) \mu_{G/H\Gamma}(G/H\Gamma)$. Consider the quotient group $G / H \Gamma$. Since $H$ is assumed to be open,  the group
$\Gamma H = \bigcup_{\gamma \in \Gamma} \gamma H$
is open as a union of open sets. Thus $G / H \Gamma$ is compact and discrete,  whence finite. Any finite group is self-dual and thus $G / H \Gamma \cong \widehat{G/ H \Gamma} \cong (H\Gamma)^{\perp} \cong (H^{\perp} \cap \Gamma^{\perp})$. Equipping $G/H\Gamma$ with the counting measure, it follows that
$\covol(\Gamma) = \# (H^{\perp} \cap \Gamma^{\perp}) \mu_{H\Gamma / \Gamma} (H\Gamma / \Gamma)$, which completes the proof.
\end{proof}

The previous result provides a relation between the number of points in the annihilator of a translation subgroup  intersecting a compact neighbourhood and its covolume. This relation is the essential part in the proof of Theorem \ref{thm:LIC_compactopen}. 

\begin{proof}[Proof of Theorem \ref{thm:LIC_compactopen}.]
Let $K \subseteq \widehat{G}$ be a compact set. Then there exists a finite sequence $(\omega_i)_{i =1}^n \subseteq \widehat{G}$ such that $K \subseteq \bigcup_{i = 1}^n \omega_i H^{\perp}$. Thus $K$ can be written as $K = \bigcup_{i = 1}^n \omega_i D_i$ , where $D_i := H^{\perp} \cap \omega_i^{-1} K$ for $i = 1, ...,n$. Throughout the proof, it will be assumed that $K$ is written in this form.

Suppose that $\cup_{j \in J} \{T_{\gamma} g_{j,p} \}_{\gamma \in \Gamma_j, p \in P_j}$ satisfies the local integrability condition \eqref{eq:LIC} with respect to $E = \emptyset$. Since $\bigcup_{i = 1}^n \omega_i H^{\perp}$ is compact, it follows  that
\begin{align*}
\infty &> \sum_{j \in J} \frac{1}{\covol(\Gamma_j)} \int_{P_j} \sum_{\alpha \in \Gamma_j^{\perp}} \int_{\bigcup_{i = 1}^n \omega_i H^{\perp} \cap \alpha^{-1} \bigcup_{i = 1}^n \omega_i H^{\perp}} |\hat{g}_{j,p} (\omega)|^2 \; d\mu_{\widehat{G}}(\omega) d\mu_{P_j} (p) \\
&\geq \sum_{j \in J} \frac{1}{\covol(\Gamma_j)} \int_{P_j} \sum_{\alpha \in \Gamma_j^{\perp} \cap H^{\perp}} \int_{\bigcup_{i = 1}^n \omega_i H^{\perp} \cap \alpha^{-1} \bigcup_{i = 1}^n \omega_i H^{\perp}} |\hat{g}_{j,p} (\omega)|^2 \; d\mu_{\widehat{G}}(\omega) d\mu_{P_j} (p) \\
&= \sum_{j \in J} \frac{1}{\covol(\Gamma_j)} \int_{P_j} \sum_{\alpha \in \Gamma_j^{\perp} \cap H^{\perp}} \int_{\bigcup_{i = 1}^n \omega_i H^{\perp}} |\hat{g}_{j,p} (\omega)|^2 \; d\mu_{\widehat{G}}(\omega) d\mu_{P_j} (p)  \\
&\geq \sum_{j \in J} \frac{1}{\covol(\Gamma_j)} \int_{P_j} \sum_{\alpha \in \Gamma_j^{\perp} \cap H^{\perp}} \int_{K} |\hat{g}_{j,p} (\omega)|^2 \; d\mu_{\widehat{G}}(\omega) d\mu_{P_j} (p) \\
&= \int_{K} \sum_{j \in J}\frac{1}{\covolh ( \Gamma_j)} \int_{P_j}   |\hat{g}_{j,p} (\omega)|^2 \;  d\mu_{P_j} (p) d\mu_{\widehat{G}}(\omega),
\end{align*}
where the last equality follows by the identity \eqref{eq:covolH}. 

Conversely, suppose that (ii) holds. Then \eqref{eq:covolH} yields that
\begin{align*}
\infty &>  \sum_{j \in J} \frac{1}{\covolh (\Gamma_j)}  \int_{P_j} \int_{K}   |\hat{g}_{j,p} (\omega)|^2 \;   d\mu_{\widehat{G}}(\omega) d\mu_{P_j} (p) \\
&=  \sum_{j \in J} \frac{1}{\covol (\Gamma_j)}  \int_{P_j} \sum_{\alpha \in \Gamma_j^{\perp} \cap H^{\perp}} \int_{K}   |\hat{g}_{j,p} (\omega)|^2 \;   d\mu_{\widehat{G}}(\omega) d\mu_{P_j} (p) \\
&\geq  \sum_{j \in J} \frac{1}{\covol (\Gamma_j)}  \int_{P_j} \sum_{\alpha \in \Gamma_j^{\perp} \cap H^{\perp}} \int_{K \cap \alpha^{-1} K}   |\hat{g}_{j,p} (\omega)|^2 \;   d\mu_{\widehat{G}}(\omega) d\mu_{P_j} (p) \numberthis \label{eq:2->1_LIC_compactopen}
\end{align*}
Fix $j \in J$ and consider $\{ \alpha \in \Gamma_j^{\perp} \; | \; K \cap \alpha^{-1} K \neq \emptyset\}$, which has $\Gamma_j^{\perp} \cap K^{-1} K$ as a superset. For the cardinality of this latter set, observe first that $K^{-1} K \subseteq \bigcup_{i = 1}^n \bigcup_{ \ell = 1}^n \omega_i^{-1} \omega_{\ell} H^{\perp}$ and that
\begin{align} \label{eq:sum_cardinality}
\# \bigg(\Gamma_j^{\perp} \cap \bigcup_{i = 1}^n \bigcup_{ \ell = 1}^n \omega_i^{-1} \omega_{\ell} H^{\perp} \bigg) \leq \sum_{i = 1}^n \sum_{\ell = 1}^n \# \bigg(\Gamma_j^{\perp} \cap \omega_i^{-1} \omega_{\ell} H^{\perp} \bigg).
\end{align}
For the summands in the right-hand side expression above, consider, for fixed $i, \ell \in \{1, ..., d\}$, the cases $\Gamma_j^{\perp} \cap \omega_i^{-1} \omega_{\ell} H^{\perp} = \emptyset$ and $\Gamma_j^{\perp} \cap \omega_i^{-1} \omega_{\ell} H^{\perp} \neq \emptyset$. For the first case, trivially $\# (\Gamma_j^{\perp} \cap \omega_i^{-1} \omega_{\ell} H^{\perp}) = 0$. In the latter case, let $z_{i, \ell} \in \Gamma_{j}^{\perp} \cap \omega_i^{-1} \omega_{\ell} H^{\perp}$. Then $z_{i,\ell} \Gamma_j^{\perp} = \Gamma_j^{\perp}$ and $z_{i,\ell} H^{\perp} = \omega_i^{-1} \omega_{\ell} H^{\perp}$, and thus
$\Gamma_j^{\perp} \cap \omega_i^{-1} \omega_{\ell} H^{\perp} = z_{i,j} (\Gamma_j^{\perp} \cap H^{\perp})$. Consequently, $\# (\Gamma_j^{\perp} \cap \omega_i^{-1} \omega_{\ell} H^{\perp}) = \# (\Gamma_j^{\perp} \cap H^{\perp})$. Using this, the estimate \eqref{eq:sum_cardinality} can be bounded as
\[ 
\# \bigg(\Gamma_j^{\perp} \cap \bigcup_{i = 1}^n \bigcup_{ \ell = 1}^n \omega_i^{-1} \omega_{\ell} H^{\perp} \bigg) \leq n^2 \cdot \# \bigg( \Gamma_j^{\perp} \cap  H^{\perp} \bigg),
\]
which implies, in particular, that $n^{-2} \cdot \# \{ \alpha \in \Gamma_j^{\perp} \; | \; K \cap \alpha^{-1} K \neq \emptyset\} \leq  \# (\Gamma_j^{\perp} \cap H^{\perp} )$. This, together with the estimate \eqref{eq:2->1_LIC_compactopen}, yields that
\begin{align*}
\infty &>  \sum_{j \in J} \frac{1}{\covol (\Gamma_j)}  \int_{P_j} \sum_{\alpha \in \Gamma_j^{\perp} \cap H^{\perp}} \int_{K \cap \alpha^{-1} K}   |\hat{g}_{j,p} (\omega)|^2 \;   d\mu_{\widehat{G}}(\omega) d\mu_{P_j} (p) \\
&\geq n^{-2} \; \sum_{j \in J} \frac{1}{\covol (\Gamma_j)}  \int_{P_j} \sum_{\alpha \in \Gamma_j^{\perp}} \int_{K \cap \alpha^{-1} K}   |\hat{g}_{j,p} (\omega)|^2 \;   d\mu_{\widehat{G}}(\omega) d\mu_{P_j} (p),
\end{align*}
which shows (i). 
\end{proof}

\begin{remark}
The integrability condition \eqref{eq:LIC_compactopen} in Theorem \ref{thm:LIC_compactopen} is stronger than the Calder\'on integrability condition \eqref{eq:CalderonLI}. This follows from Lemma \ref{lem:covol_compactopen}, which gives $\covolh(\Gamma_j) \leq \covol(\Gamma_j)$ for all $j \in J$. Only if the underlying group is compact  do both conditions coincide.
\end{remark}

\subsection{Euclidean space} \label{sec:Euclid}

This subsection is concerned with the hypotheses of Proposition \ref{prop:LIC_sufficient} for the case $G = \mathbb{R}^d$. In this setting, any closed, co-compact subgroup $\Gamma \subseteq \mathbb{R}^d$ has the form 
\[ \Gamma = C (\mathbb{Z}^n \times \mathbb{R}^{d-n} ) \]
for some $C \in \mathrm{GL}(d, \mathbb{R})$ and an $n \in \{0, 1, ..., d \}$. In particular, a uniform lattice is a full-rank lattice $\Gamma \subseteq \mathbb{R}^d$, that is, $\Gamma = C \mathbb{Z}^d$ for some $C \in \mathrm{GL}(d, \mathbb{R})$. 

The Haar measure on $\mathbb{R}^d$ is always assumed to be the Lebesgue measure. Using the notation $C^{\sharp} = (C^T)^{-1}$, the annihilator of a closed, co-compact subgroup $\Gamma = C (\mathbb{Z}^n \times \mathbb{R}^{d-n})$ in $\mathbb{R}^d$ reads $\Gamma^{\perp} = C^{\sharp} (\mathbb{Z}^n \times \{0\}^{d-n})$. In the sequel, the annihilator will be equipped with the weighted counting measure $|\det C|^{-1} \#$. This convention gives $\covol(\Gamma) = |\det C|$.

The next two results provide a class of translation subgroups that automatically satisfy the uniform counting estimate \eqref{eq:counting}. In particular,  the uniform counting estimate turns out to be trivial in dimension one.

\begin{lemma} \label{lem:counting_isotropic}
 Any family $(a_j (\mathbb{Z}^n \times \mathbb{R}^{d-n}))_{j \in J}$ with $a_j \in \mathbb{R}^+$ and $n \in \{0,1,...,d\}$, satisfies the uniform counting estimate \eqref{eq:counting}.
 \end{lemma}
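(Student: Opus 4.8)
The plan is to invoke Lemma \ref{lem:neighbourhood} to reduce the uniform counting estimate to a single compact neighbourhood of the identity, and then to carry out an elementary lattice-point count on that neighbourhood. Concretely, identifying $\widehat{\mathbb{R}}^d \cong \mathbb{R}^d$ and writing the dual group additively, I would take $K = [-1,1]^d$. Fix $j \in J$ and set $a := a_j$. Writing $\Gamma_j = C(\mathbb{Z}^n \times \mathbb{R}^{d-n})$ with $C = a I_d$, the conventions of Section \ref{sec:Euclid} give $\covol(\Gamma_j) = |\det C| = a^d$ and $\Gamma_j^\perp = C^\sharp(\mathbb{Z}^n \times \{0\}^{d-n}) = a^{-1}(\mathbb{Z}^n \times \{0\}^{d-n})$; if $n = 0$, then $\Gamma_j^\perp = \{0\}$ and the left-hand side of \eqref{eq:counting} never exceeds $1$, so I assume $n \ge 1$ from now on.

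The first step is the count itself. For $\omega = (\omega_1, \dots, \omega_d) \in \mathbb{R}^d$ one has
\[
\sum_{\alpha \in \Gamma_j^\perp} \mathds{1}_{K}(\omega + \alpha) = \#\bigl\{ k \in \mathbb{Z}^n : a^{-1} k_i \in [\omega_i - 1, \omega_i + 1] \text{ for } i = 1, \dots, n \bigr\},
\]
and since each of the $n$ coordinate constraints confines $k_i$ to an interval of length $2a$, which meets $\mathbb{Z}$ in at most $2a + 1$ points, this count is bounded by $(2a + 1)^n$ uniformly in $\omega$. The second step is to absorb this bound into the right-hand side of \eqref{eq:counting}: since $n \le d$ and $2a + 1 \le 3\max(1, a)$, one obtains $(2a+1)^n \le (2a+1)^d \le 3^d \max(1, a^d) \le 3^d\bigl(1 + \covol(\Gamma_j)\bigr)$. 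As the constant $3^d$ is independent of $j$ (and even of $n$), Lemma \ref{lem:neighbourhood} then yields the uniform counting estimate for the whole family.

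I do not expect a genuine obstacle here; the computation is routine. The only points that deserve to be spelled out are that the annihilator of the dilate $a(\mathbb{Z}^n \times \mathbb{R}^{d-n})$ is the $a^{-1}$-dilate of $\mathbb{Z}^n$ sitting in the first $n$ coordinates (the remaining $d - n$ coordinates being annihilated), and that the polynomial count $(2a+1)^n$ is dominated by a fixed multiple of $1 + a^d$ uniformly over $a \in \mathbb{R}^+$ precisely because $n \le d$. This last point is also exactly what trivialises the one-dimensional case: for $d = 1$ the only possibilities are $\Gamma_j = a_j \mathbb{Z}$, with $\Gamma_j^\perp = a_j^{-1}\mathbb{Z}$ and count at most $2a_j + 1 \le 3\bigl(1 + \covol(\Gamma_j)\bigr)$, or $\Gamma_j = \mathbb{R}$, with $\Gamma_j^\perp$ trivial.
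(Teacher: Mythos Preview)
Your proof is correct and follows essentially the same approach as the paper's: reduce via Lemma~\ref{lem:neighbourhood} to a single compact neighbourhood of the identity and carry out an elementary lattice-point count bounded by a constant times $1 + a_j^d$. The only cosmetic differences are that the paper uses a small Euclidean ball (radius $r = 1/4$) in place of your cube $[-1,1]^d$ and handles the case $n < d$ via the inclusion $a_j^{-1}(\mathbb{Z}^n \times \{0\}^{d-n}) \subseteq a_j^{-1}\mathbb{Z}^d$ rather than your inequality $(2a+1)^n \le (2a+1)^d$.
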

\begin{proof}
Let $K := \overline{B_r (0)}$ be the closed Euclidean ball. First, consider the case $n = d$. 
The series in \eqref{eq:counting} reads $
 \sum_{m \in \mathbb{Z}^d} \mathds{1}_K (\omega + a_j^{-1} m)
$
for  $\omega \in \mathbb{R}^d$. For fixed $j \in J$ and $\omega \in \mathbb{R}^d$, let $m_0 \in \mathbb{Z}^d$ be such that $\mathds{1}_K (\omega + a_j^{-1} m_0) \neq 0$. If $\mathds{1}_K (\omega + a_j^{-1} m) \neq 0$ for some $m \in \mathbb{Z}^d$, then
$2r \geq \|(\omega - a^{-1}_j m) - (\omega - a_j^{-1} m_0) \| = a^{-1}_j \|m_0 - m\|$, and hence $m \in \overline{B_{a_j 2r}(m_0)}$. But since $\overline{B_{a_j 2r}(m_0)} \subseteq [m_0 - a_j 2r, m_0 + a_j 2r]^d$, the number of points $m \in \mathbb{Z}^d$ for which $\mathds{1}_K (\omega + a_j^{-1} m) \neq 0$ does not exceed
\[ 
Z_j := \# \bigg\{ m \in \mathbb{Z}^d \; \bigg| \; m \in [m_0 - a_j 2r, \; m_0 + a_j2r]^d \bigg\} \leq ( \lfloor a_j 4 r\rfloor + 1)^d. 
\]
Assume now that $r = 1/4$. Then $Z_j \leq 1$ if $a_j < 1$ and $Z_j \leq 2^d a_j^d$ if $a_j \geq 1$. Thus  $(a_j \mathbb{Z}^d)_{j \in J}$ satisfies the uniform counting estimate \eqref{eq:counting} by Lemma \ref{lem:neighbourhood}.

For the case $n < d$, the index set of the series in \eqref{eq:counting} is  $a_j^{-1}  (\mathbb{Z}^n \times \{0\}^{d-n})$. Since the inclusion $a_j^{-1}  (\mathbb{Z}^n \times \{0\}^{d-n}) \subseteq a_j^{-1} \mathbb{Z}^d$ is proper, the result follows.
\end{proof}

\begin{proposition} \label{prop:singularvalue}
Let $(C_j)_{j \in J}$ be a family of matrices $C_j \in \mathrm{GL}(d, \mathbb{R})$ such that, for all $j \in J$, the quotient of the maximal singular value of $C_j$ divided by the minimal singular value is bounded by a constant. Then $(C_j (\mathbb{Z}^n \times \mathbb{R}^{n - d}))_{j \in J}$ satisfies the uniform counting estimate \eqref{eq:counting} for any $n \in \{0,1,...,d\}$. 
\end{proposition}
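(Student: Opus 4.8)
\textbf{Proof proposal for Proposition \ref{prop:singularvalue}.}

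The plan is to reduce the general anisotropic case to the isotropic one covered by Lemma \ref{lem:counting_isotropic}, using the singular value decomposition together with the rotation-invariance of Euclidean balls. First I would write, for each $j \in J$, the singular value decomposition $C_j = U_j D_j V_j$ with $U_j, V_j$ orthogonal and $D_j = \diag(\sigma_1^{(j)}, \dots, \sigma_d^{(j)})$ the diagonal matrix of singular values, ordered so that $\sigma_{\max}^{(j)} = \sigma_1^{(j)} \geq \cdots \geq \sigma_d^{(j)} = \sigma_{\min}^{(j)} > 0$. By hypothesis there is $\kappa \geq 1$ with $\sigma_{\max}^{(j)} \leq \kappa \, \sigma_{\min}^{(j)}$ for all $j$. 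Since $(C_j^T)^{-1} = U_j D_j^{-1} V_j$ as well (the orthogonal factors just get transposed/relabelled), the annihilator is $\Gamma_j^{\perp} = U_j D_j^{-1} V_j (\mathbb{Z}^n \times \{0\}^{d-n})$, and I only need to control the spreadness of this set.

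The key step is the following chain of inclusions for the ball $K = \overline{B_r(0)}$. Using Lemma \ref{lem:neighbourhood} I may fix a convenient radius, say $r$ to be chosen at the end. For $\omega \in \mathbb{R}^d$ and a point $\alpha = U_j D_j^{-1} V_j k$ with $k \in \mathbb{Z}^n \times \{0\}^{d-n}$, the condition $\omega + \alpha \in \overline{B_r(0)}$ is, after applying the isometry $U_j^{-1}$, equivalent to $U_j^{-1}\omega + D_j^{-1} V_j k \in \overline{B_r(0)}$; and since $D_j^{-1}$ has all entries (diagonal) bounded below by $1/\sigma_{\max}^{(j)}$ in absolute value — wait, more carefully: $\|D_j^{-1} x\| \geq (\sigma_{\max}^{(j)})^{-1}\|x\|$ and $\|D_j^{-1}x\| \leq (\sigma_{\min}^{(j)})^{-1}\|x\|$. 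The useful direction is: if $D_j^{-1} V_j k$ lies in a ball of radius $2r$ around a fixed point (comparing two solutions as in the proof of Lemma \ref{lem:counting_isotropic}), then $V_j k$ lies in a ball of radius $2r\,\sigma_{\max}^{(j)}$, hence (scaling by $(\sigma_{\min}^{(j)})^{-1}$, using $\sigma_{\max}^{(j)} \leq \kappa\sigma_{\min}^{(j)}$) one gets that $k$ ranges over the lattice points $\mathbb{Z}^n\times\{0\}^{d-n}$ intersected with a ball of radius $2r\kappa\,\sigma_{\max}^{(j)} / \sigma_{\min}^{(j)} \cdot$ — let me restate cleanly: I compare $\alpha$ to a reference solution $\alpha_0 = U_j D_j^{-1} V_j k_0$ with $\omega+\alpha_0 \in K$, so $\|\alpha - \alpha_0\| \leq 2r$, i.e. $\|D_j^{-1}V_j(k-k_0)\| \leq 2r$, hence $\|V_j(k - k_0)\| \leq 2r\,\sigma_{\max}^{(j)}$, hence $\|k - k_0\| \leq 2r\,\sigma_{\max}^{(j)}$. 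So $k$ ranges over $\mathbb{Z}^n \times \{0\}^{d-n}$ inside a ball of radius $2r\,\sigma_{\max}^{(j)}$ centered at $k_0$, of which there are at most $(\lfloor 4r\,\sigma_{\max}^{(j)}\rfloor + 1)^n$. This count depends only on $\sigma_{\max}^{(j)}$, not on the full shape of $C_j$, but it is in terms of the wrong quantity — I want it in terms of $\covol(\Gamma_j) = |\det C_j|$, not $\sigma_{\max}^{(j)}$.

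Here is where the bounded-distortion hypothesis does the real work, and I expect this bookkeeping to be the main (though modest) obstacle. Since $|\det C_j| = \prod_i \sigma_i^{(j)}$ and all singular values lie in $[\sigma_{\min}^{(j)}, \kappa\sigma_{\min}^{(j)}]$, we have $(\sigma_{\min}^{(j)})^d \leq |\det C_j| \leq \kappa^d (\sigma_{\min}^{(j)})^d$, and likewise $(\sigma_{\max}^{(j)})^d \leq \kappa^d |\det C_j|$, so $\sigma_{\max}^{(j)} \leq \kappa\,|\det C_j|^{1/d}$. Therefore the count above is at most $(4r\kappa\,|\det C_j|^{1/d} + 1)^n \leq (4r\kappa + 1)^n (|\det C_j|^{1/d} + 1)^n$. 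Now I split into two regimes exactly as in Lemma \ref{lem:counting_isotropic}: if $|\det C_j| \leq 1$ the bound is at most the constant $(8r\kappa + 1)^n$, while if $|\det C_j| > 1$ then $(|\det C_j|^{1/d}+1)^n \leq (2|\det C_j|^{1/d})^n = 2^n |\det C_j|^{n/d} \leq 2^n |\det C_j|$ since $n \leq d$. In either case the spreadness of $\Gamma_j^{\perp}$ over the ball $\overline{B_r(0)}$ is bounded by $C(r,\kappa,n)\,(1 + \covol(\Gamma_j))$ with a constant independent of $j$. Finally, for $n < d$ the index set $\mathbb{Z}^n \times \{0\}^{d-n}$ embeds into $\mathbb{Z}^d$, so the $n = d$ estimate dominates and the argument is uniform in $n$. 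Invoking Lemma \ref{lem:neighbourhood} with the single neighbourhood $\overline{B_r(0)}$ then upgrades this to the uniform counting estimate \eqref{eq:counting} for every compact $K$, completing the proof.
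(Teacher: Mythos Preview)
Your argument is correct and follows essentially the same route as the paper's proof: both exploit the bounded singular-value ratio to reduce the counting for $\Gamma_j^{\perp}$ to an isotropic count governed by a single scale, and then convert the resulting bound in terms of $\sigma_{\max}^{(j)}$ into one in terms of $|\det C_j|$ via $(\sigma_{\max}^{(j)})^d \leq \kappa^d |\det C_j|$. The only cosmetic difference is that the paper packages the isotropic step by invoking Lemma \ref{lem:counting_isotropic} as a black box (writing $C_j^{\sharp} = L_j \cdot b_j I$ with $\|L_j\|\leq K$ and applying the lemma to $(b_j^{-1}(\mathbb{Z}^n\times\mathbb{R}^{d-n}))_j$), whereas you unwind that lemma inline via the SVD and the two-solution comparison.
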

\begin{proof}
For each $j \in J$, let $\sigma_{\min} (C^{\sharp}_j)$ and $\sigma_{\max} (C^{\sharp}_j)$ denote the minimal respectively the maximal singular value of $C_j^{\sharp}$. By assumption, there exists a  $K > 0$ such that
\begin{align} \label{eq:constant_singular}
\sup_{j \in J} \; \frac{\sigma_{\max}(C_j^{\sharp})} {\sigma_{\min} (C_j^{\sharp})} \leq K.
\end{align}
 Write $C_j^{\sharp} = L_j b_j I$, where $b_j := \sigma_{\min} (C_j^{\sharp})$ and $L_j := b_j^{-1} C_j^{\sharp}$.  Then $\sigma_{\max} (L_j) \leq K$ and $\sigma_{\min} (L_j) = 1$ for all $j \in J$. In order to verify the uniform counting estimate \eqref{eq:counting}, fix an $r > 0$ and let $B_r (\omega)$  denote the Euclidean ball with centre $\omega \in \widehat{\mathbb{R}}^d$. Then
\[ L_j^T (B_r (\omega)) \subseteq B_{Kr} (L_j^T \omega) \]
 for any $j \in J$ and $\omega \in \widehat{\mathbb{R}}^d$. According to Lemma \ref{lem:counting_isotropic}, the family $(b_j^{-1} (\mathbb{Z}^n \times \mathbb{R}^{d-n}))_{j \in J}$ satisfies the uniform counting estimate \eqref{eq:counting}. Hence, there exists a constant $C > 0$ such that
\begin{align} \label{eq:singular_UCE}
 \sup_{\omega \in \mathbb{R}^d} \# \bigg( b_j ( \mathbb{Z}^n \times \{0 \}^{d-n}) \; \cap \; B_{Kr}(L^T_j \omega)  \bigg) \leq C \bigg( 1 + b_j^{-d} \bigg). 
\end{align}
for all $j \in J$. By use of the assumption \eqref{eq:constant_singular}, it follows that 
\[ b_j^{-d} = \sigma_{\max} (C_j)^{d} \leq K^d \sigma_{\min} (C_j)^d \leq K^d |\det C_j|. \] This, together with the estimate \eqref{eq:singular_UCE}, yields that 
\begin{align*}
\sup_{\omega \in \mathbb{R}^d} \# \bigg( C_j^{\sharp} ( \mathbb{Z}^n \times \{0 \}^{d-n}) \; \cap \;  B_{r} (\omega)  \bigg) &\leq  \sup_{\omega \in \mathbb{R}^d} \# \bigg( b_j ( \mathbb{Z}^n \times \{0 \}^{d-n}) \; \cap \; B_{Kr}(L^T_j \omega)  \bigg) \\
&\leq C K^d \bigg( 1 + | \det C_j | \bigg)
\end{align*}
for all $j \in J$. An application of Lemma \ref{lem:neighbourhood} therefore completes the proof.
\end{proof}

For a family $(C_j)_{j \in J} \subseteq \mathrm{GL}(d,\mathbb{R})$ that does not satisfy the hypotheses of Proposition \ref{prop:singularvalue}, the lattice system $(C_j \mathbb{Z}^d)_{j \in J}$ might fail the uniform counting estimate as demonstrated by Example \ref {ex:failingUCEbutLIC}. An existence result for generalised shift-invariant frames with lattice systems satisfying the hypotheses in Proposition \ref{prop:singularvalue} is contained in \cite{fuhr2017system}. 

The next result identifies a class of lattice systems for which the associated system satisfies the temperateness condition \eqref{eq:norm-bounded} provided that the system satisfies the local integrability condition.  

\begin{proposition} \label{prop:diagonal}
For $j \in J$, let $C_j := \diag(a_1^{(j)}, ..., a_d^{(j)}) \in \mathrm{GL}(d, \mathbb{R})$ for $a_i^{(j)} \in \mathbb{R}^+$, and let $\Gamma_j := C_j \mathbb{Z}^d$. Suppose the generalised translation-invariant system $\cup_{j \in J} \{T_{\gamma} g_{j,p} \}_{\gamma \in \Gamma_j, p \in P_j}$ satisfies the local integrability condition \eqref{eq:LIC} with respect to $E \in \mathcal{E}$. Then the generating functions $\cup_{j \in J} \{g_{j,p} \}_{p \in P_j}$ are strictly temperate \eqref{eq:norm-bounded} with respect to $E$.
\end{proposition}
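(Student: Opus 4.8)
The plan is to derive the statement from Lemma~\ref{lem:lowerCE}: it suffices to verify the lower counting estimate \eqref{eq:lowerCE} for the family $(\Gamma_j)_{j\in J}=(C_j\mathbb{Z}^d)_{j\in J}$ with respect to $E$. Here $\Gamma_j^{\perp}=C_j^{\sharp}\mathbb{Z}^d=\diag\big((a_1^{(j)})^{-1},\dots,(a_d^{(j)})^{-1}\big)\mathbb{Z}^d$, the covolume is $\covol(\Gamma_j)=|\det C_j|=\prod_{i=1}^{d}a_i^{(j)}$, and $\sum_{\alpha\in\Gamma_j^{\perp}}\mathds{1}_Q(\omega\alpha)=\#\big(C_j^{\sharp}\mathbb{Z}^d\cap(Q-\omega)\big)$.

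First I would fix a compact set $K\subseteq\widehat{\mathbb{R}}^d\setminus E$ and put $\delta:=\operatorname{dist}(K,E)$, which is positive since $E$ is closed and disjoint from the compact set $K$ (if $E=\emptyset$, take $\delta:=1$). Next, choose $\ell\in(0,\delta/\sqrt{d})$, cover $K$ by finitely many open cubes $U_{\xi_1},\dots,U_{\xi_M}$ of side $\ell$ centred at points $\xi_m\in K$, and set $Q_m:=\overline{U_{\xi_m}}$ and $Q:=\bigcup_{m=1}^{M}Q_m$. Then $Q$ is a compact superset of $K$, and since $\operatorname{diam}(Q_m)=\ell\sqrt{d}<\delta$ one gets $\operatorname{dist}(Q_m,E)\ge\delta-\ell\sqrt{d}>0$, so $Q\subseteq\widehat{\mathbb{R}}^d\setminus E$. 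The geometric heart of the argument is then the following: for $\omega\in Q$, say $\omega\in Q_m$, the translate $Q-\omega$ contains $Q_m-\omega=\prod_{i=1}^{d}[u_i,u_i+\ell]$ with $u_i\in[-\ell,0]$, so each coordinate interval contains $0$; hence
\begin{align*}
\#\big(C_j^{\sharp}\mathbb{Z}^d\cap(Q-\omega)\big)
&\;\ge\;\prod_{i=1}^{d}\#\Big(\mathbb{Z}\cap\big[a_i^{(j)}u_i,\,a_i^{(j)}u_i+\ell a_i^{(j)}\big]\Big)\\
&\;\ge\;\prod_{i=1}^{d}\max\!\big(1,\,\ell a_i^{(j)}-1\big)
\;\ge\;\Big(\tfrac{\ell}{2}\Big)^{d}\prod_{i=1}^{d}a_i^{(j)},
\end{align*}
where the first inequality uses $Q\supseteq Q_m$ together with the product structure of $C_j^{\sharp}\mathbb{Z}^d$, the second that an interval of length $L\ge 0$ containing $0$ holds at least $\max(1,L-1)$ integers, and the last the elementary bound $\max(1,t-1)\ge t/2$ valid for $t\ge0$. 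Since $\prod_i a_i^{(j)}=\covol(\Gamma_j)$, this establishes \eqref{eq:lowerCE} with the superset $Q$ and the constant $C=(\ell/2)^{d}$, which depends only on $K$, and Lemma~\ref{lem:lowerCE} then yields that $\cup_{j\in J}\{g_{j,p}\}_{p\in P_j}$ is strictly temperate \eqref{eq:norm-bounded} with respect to $E$.

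The step I expect to be the crux is the lattice-point count, and it is exactly where the diagonality of the $C_j$ enters: for a diagonal matrix the count factorises over the coordinate directions, and each factor stays bounded below by a fixed multiple of $a_i^{(j)}$ even when the origin lies at a corner of the cube $Q_m-\omega$. For a non-diagonal (for instance sheared) lattice such a corner position can make the count drop far below $\covol(\Gamma_j)$, which is precisely the mechanism behind Example~\ref{ex:mainexample}, where the local integrability condition holds but temperateness fails. A minor additional point is that the enlarged set $Q$ must avoid the blind spot $E$ while the cube side $\ell$ stays bounded below; this is guaranteed by $\operatorname{dist}(K,E)>0$.
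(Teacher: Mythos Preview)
Your proof is correct and follows the same strategy as the paper: invoke Lemma~\ref{lem:lowerCE} and exploit the diagonal form of $C_j$ to factorise the lattice-point count coordinatewise, then bound each factor below by a fixed multiple of $a_i^{(j)}$. The one notable difference is your choice of the superset $Q$: the paper takes $Q=[-r,r]^d\setminus E$ for some $r>2$, whereas you build $Q$ as a finite union of small closed cubes centred on $K$; your construction has the merit of producing a set that is genuinely compact and contained in $\widehat{\mathbb{R}}^d\setminus E$ (as Lemma~\ref{lem:lowerCE} formally requires) even when $E$ meets $[-r,r]^d$, at the cost of a constant $(\ell/2)^d$ that shrinks with $\operatorname{dist}(K,E)$.
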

\begin{proof}
By use of Lemma \ref{lem:lowerCE}, it suffices to show the estimate \eqref{eq:lowerCE}. In order to do so, fix an $E \in \mathcal{E}$ and let $K \subseteq \widehat{\mathbb{R}}^d \setminus E$ be a compact set. Choose $r > 2$ such that $K \subseteq [-r,r]^d \setminus E$  and let $Q := [-r,r]^d \setminus E$.
For fixed $j \in J$, consider the series 
\[
 \sum_{\alpha \in \Gamma_j^{\perp}} \mathds{1}_{Q} (\omega + \alpha) =\sum_{k \in \mathbb{Z}^d} \mathds{1}_{Q} (\omega + C^{-1}_j k)  = \sum_{k \in \mathbb{Z}^d} \mathds{1}_{C_j (Q)} (C_{j} \omega +  k),
\]
for $\omega \in Q$. Observe that $C_j (Q) = \prod_{i =1}^d [-a_{i}^{(j)} r, a_{i}^{(j)} r]$ for each $j \in J$. Therefore
\[
\sum_{\alpha \in \Gamma_j^{\perp}} \mathds{1}_{Q} (\omega + \alpha) = \prod_{i = 1}^d \# \bigg\{ m \in \mathbb{Z}  \; \bigg| \; m \in [-a_i^{(j)}r - a_i^{(j)} \omega_i, \; a_i^{(j)}r + a_i^{(j)} \omega_i] \bigg\}
\]
for $\omega = (\omega_1, ..., \omega_d) \in Q$. Since $r > 2$, for any fixed $i = 1, ..., d$ and $j \in J$, there exists a constant $K_i > 0$ such that
\[
\# \bigg\{ m \in \mathbb{Z}  \; \bigg| \; m \in [-a_i^{(j)} (r -  \omega_i) , \;a_i^{(j)}(r + \omega_i) ] \bigg\} \geq K_i a_i^{(j)}.
\]
Therefore, it follows that, for all $j \in J$,
\[
\inf_{\omega \in Q} \sum_{\alpha \in \Gamma_j^{\perp}} \mathds{1}_{Q} (\omega + \alpha) \geq \prod_{i = 1}^d K_i |\det C_j|,
\]
which shows that condition \eqref{eq:lowerCE} is satisfied. 
\end{proof}

\begin{corollary} \label{cor:LIC_line}
Let $(C_j)_{j \in J}$ be a family of diagonal matrices $C_j \in \mathrm{GL}(d, \mathbb{R})$ such that, for all $j \in J$, the quotient of the maximal singular value of $C_j$ divided by the minimal singular value is bounded by a constant. Then the following assertions are equivalent:
\begin{enumerate}[(i)]
\item The system $\cup_{j \in J} \{T_{\gamma} g_{j,p} \}_{\gamma \in C_j \mathbb{Z}^d, p \in P_j}$ satisfies the local integrability condition \eqref{eq:LIC}. 
\item The system $\cup_{j \in J} \{T_{\gamma} g_{j,p} \}_{\gamma \in C_j \mathbb{Z}^d, p \in P_j}$ satisfies the Calder\'on integrability condition \eqref{eq:CalderonLI} and the temperateness condition \eqref{eq:norm-bounded}.
\end{enumerate}
\end{corollary}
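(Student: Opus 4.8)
The plan is to deduce Corollary \ref{cor:LIC_line} by combining the sufficiency result Proposition \ref{prop:LIC_sufficient} with the two necessity-type results that precede it. The implication (ii)$\Rightarrow$(i) is immediate: by hypothesis the family $(C_j)_{j \in J}$ of diagonal matrices has bounded ratio of maximal to minimal singular value, so Proposition \ref{prop:singularvalue} (applied with $n = d$) shows that the lattice system $(C_j \mathbb{Z}^d)_{j \in J}$ satisfies the uniform counting estimate \eqref{eq:counting}. Together with the assumed Calder\'on integrability condition \eqref{eq:CalderonLI} and temperateness condition \eqref{eq:norm-bounded}, Proposition \ref{prop:LIC_sufficient} then yields the local integrability condition \eqref{eq:LIC}.

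For the reverse implication (i)$\Rightarrow$(ii), I would split the conclusion into its two halves. The Calder\'on integrability condition follows directly from Lemma \ref{lem:LIC->CalderonLI}, which says that the LIC (with respect to any $E \in \mathcal{E}$, in particular $E = \emptyset$) always implies \eqref{eq:CalderonLI}. For the temperateness condition, the key observation is that each $C_j$ is \emph{diagonal}, so $C_j = \diag(a_1^{(j)}, \dots, a_d^{(j)})$ for suitable $a_i^{(j)} \in \mathbb{R}^+$, and hence Proposition \ref{prop:diagonal} applies verbatim: if the system satisfies the LIC with respect to $E$, then the generating functions $\cup_{j \in J}\{g_{j,p}\}_{p \in P_j}$ are strictly temperate with respect to $E$. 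This gives \eqref{eq:norm-bounded} and completes (i)$\Rightarrow$(ii).

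I do not expect any genuine obstacle here, since the corollary is essentially a bookkeeping combination of Propositions \ref{prop:LIC_sufficient}, \ref{prop:singularvalue} and \ref{prop:diagonal} and Lemma \ref{lem:LIC->CalderonLI}. The only point that requires a moment's care is checking that the diagonal-matrix hypothesis is exactly what is needed to invoke Proposition \ref{prop:diagonal} (which is stated only for diagonal $C_j$), while the bounded-singular-value-ratio hypothesis is exactly what Proposition \ref{prop:singularvalue} needs for the uniform counting estimate; both hypotheses of the corollary are present, so both propositions apply. One should also note that Proposition \ref{prop:diagonal} is stated for general $E \in \mathcal{E}$ whereas the LIC in the corollary is understood with the default blind spot (i.e. $E = \emptyset$); since the implications go through for that choice of $E$, there is nothing further to verify.

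Thus the proof is simply: assume (i); apply Lemma \ref{lem:LIC->CalderonLI} to get \eqref{eq:CalderonLI} and Proposition \ref{prop:diagonal} to get \eqref{eq:norm-bounded}, establishing (ii). Conversely assume (ii); apply Proposition \ref{prop:singularvalue} to get the uniform counting estimate \eqref{eq:counting}, then Proposition \ref{prop:LIC_sufficient} with \eqref{eq:CalderonLI}, \eqref{eq:norm-bounded} and \eqref{eq:counting} to get (i). No computation beyond citing these results is needed.
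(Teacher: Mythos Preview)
Your proposal is correct and follows exactly the same route as the paper: (i)$\Rightarrow$(ii) via Lemma~\ref{lem:LIC->CalderonLI} and Proposition~\ref{prop:diagonal}, and (ii)$\Rightarrow$(i) via Proposition~\ref{prop:singularvalue} together with Proposition~\ref{prop:LIC_sufficient}. There is nothing to add.
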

\begin{proof}
That (i) implies (ii) follows directly by combining Lemma \ref{lem:LIC->CalderonLI} and Proposition \ref{prop:diagonal}. The converse implication is a consequence of Proposition \ref{prop:LIC_sufficient} and Proposition \ref{prop:singularvalue}.
\end{proof}
 
For $G = \mathbb{R}$, the previous results recovers \cite[Proposition 3.3]{christensen2017explicit}. Both implications might fail for lattice systems that do not satisfy the hypotheses, cf.  Example \ref{ex:mainexample} and Remark \ref {rem:construction_wavelet}.

The remainder of this section is devoted to lattice systems $(\Gamma_j )_{j \in \mathbb{Z}}$ whose elements form the image of a closed, co-compact subgroup under an integer power of a non-singular matrix, that is, $\Gamma_j := A^j \Gamma$ for some closed, co-compact subgroup $\Gamma \subseteq \mathbb{R}^d$ and an $A \in \mathrm{GL}(d, \mathbb{R})$. Counting estimates for lattice systems of this form have appeared in numerous papers, including \cite{hernandez2002unified,chui2002characterization,calogero2000characterization,guo2006some,bownik2011affine}. In particular, a detailed study of the counting estimate was given by Bownik \& Lemvig \cite{bownik2017wavelets}. In all these papers, the notion of an expanding matrix or, more generally, a matrix expanding on a subspace plays a prominent role.

\begin{definition}
A matrix $A \in \mathrm{GL}(d, \mathbb{R})$ is said to be \emph{expanding on a subspace} if
\begin{enumerate}[(a)]
\item All eigenvalues $\lambda \in \sigma(A)$ satisfy $|\lambda| \geq 1$.
\item There exists an eigenvalue $\lambda \in \sigma(A)$ satisfying $|\lambda|>1$.
\item All eigenvalues $\lambda \in \sigma(A)$ satisfying $|\lambda|=1$  have Jordan blocks of order one. 
\end{enumerate}
A matrix $A \in \mathrm{GL}(d, \mathbb{R})$ whose eigenvalues $\lambda \in \sigma(A)$ satisfy $|\lambda| > 1$ is called \emph{expanding}.
\end{definition} 

The next result is \cite[Theorem 2.4]{bownik2017wavelets}. For the first part of the theorem, see also the corresponding results in \cite{hernandez2002unified,guo2006some,bownik2011affine}.

\begin{theorem}[\cite{bownik2017wavelets}] \label{thm:LCE_bownik}
Let $A \in \mathrm{GL}(d, \mathbb{R})$ be such that $|\det A| > 1$.
\begin{enumerate}[(i)]
\item Suppose that $A$ is expanding on a subspace. Then, for every full-rank lattice $\Gamma \subseteq \mathbb{R}^d$, the pair $(A, \Gamma)$ satisfies the \emph{lattice counting estimate}, i.e., there exists a constant $C > 0$ such that
\begin{align} \label{eq:LCE}
\# \bigg( \Gamma \cap A^j B_r (0) \bigg) \leq C \max\{1, |\det A |^j \}
\end{align}
for all $j \in \mathbb{Z}$.
\item Suppose the pair $(A, \Gamma)$ satisfies the lattice counting estimate \eqref{eq:LCE} for all full-rank lattices $\Gamma \subseteq \mathbb{R}^d$. Then $A$ is expanding on a subspace. 
\end{enumerate}
\end{theorem}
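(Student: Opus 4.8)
The plan is to prove the two implications by quite different means: (i) by a reduction to real Jordan normal form combined with a volume/lattice‑point packing estimate, and (ii) by contraposition, constructing lattices that witness the failure of \eqref{eq:LCE}. For part (i), write $A = S J S^{-1}$ with $J$ in real Jordan form and split $J = A_{1} \oplus A_{2}$ over a decomposition $\mathbb{R}^{d} = V_{1} \oplus V_{2}$, where $A_{1}$ collects the Jordan blocks for eigenvalues of modulus one (all of order one, by hypothesis) and $A_{2}$ those for eigenvalues of modulus $> 1$. Applying $S^{-1}$ to both the lattice and the ball turns $\#(\Gamma \cap A^{j} B_{r}(0))$ into $\#(\Gamma' \cap J^{j} K_{0})$ with $\Gamma' = S^{-1}\Gamma$ a full-rank lattice and $K_{0} = S^{-1} B_{r}(0) \subseteq B_{R}(0)$ for some $R$; since $|\det J| = |\det A|$, it suffices to bound $\#(\Gamma' \cap J^{j} B_{R}(0))$. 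Because the blocks in $A_{1}$ have order one, $A_{1}$ is conjugate to a block-diagonal matrix of planar rotations and signs, so $\sup_{j \in \mathbb{Z}} \|A_{1}^{j}\| =: M_{1} < \infty$; and since all eigenvalues of $A_{2}$ have modulus $> 1$, one has $\sup_{j \leq 0} \|A_{2}^{j}\| < \infty$ and $\inf_{j \geq 0} \|A_{2}^{-j}\|^{-1} =: c > 0$. Using $B_{R}(0) \subseteq (R\text{-ball in }V_{1}) \times (R\text{-ball in }V_{2})$ and $A_{1}^{j}(R\text{-ball in }V_{1}) \subseteq (M_{1}R\text{-ball in }V_{1})$, the set $J^{j} B_{R}(0)$ lies in a fixed ball of $V_{1}$ times the ellipsoid $A_{2}^{j}(R\text{-ball in }V_{2})$. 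Now I would apply the packing bound: if $\delta > 0$ is the length of a shortest nonzero vector of $\Gamma'$, the balls $w + \tfrac{\delta}{2} B_{1}(0)$, $w \in \Gamma' \cap S$, are pairwise disjoint and contained in $S + \tfrac{\delta}{2} B_{1}(0)$, so $\#(\Gamma' \cap S) \leq \vol(S + \tfrac{\delta}{2} B_{1}(0)) / \vol(\tfrac{\delta}{2} B_{1}(0))$. For $j \geq 0$ the ellipsoid $A_{2}^{j}(R\text{-ball in }V_{2})$ has all semi-axes $\geq cR$, so thickening it by $\tfrac{\delta}{2} B_{1}(0)$ changes its volume only by a factor bounded uniformly in $j$; together with the bounded $V_{1}$-factor this gives $\#(\Gamma' \cap J^{j} B_{R}(0)) \leq C |\det A_{2}|^{j} = C |\det A|^{j}$. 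For $j < 0$ the set $S$ is contained in a fixed ball, so the count is bounded independently of $j$. Combining the two ranges gives the lattice counting estimate.

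For part (ii) I argue by contraposition: suppose $A$ is not expanding on a subspace. Since $|\det A| > 1$, condition (b) is automatic, so the failure is either (a) some eigenvalue of modulus $< 1$, or (c) some unit-modulus eigenvalue with a Jordan block of order $k \geq 2$. Case (a) is easy: let $W$ be the nonzero $A$-invariant sum of the generalized eigenspaces for eigenvalues of modulus $< 1$; then $\|(A|_{W})^{-n}\|^{-1} \to \infty$, so for $j \to -\infty$ the set $A^{j} B_{r}(0) \cap W \supseteq (A|_{W})^{j}(B_{r}(0) \cap W)$ is an ellipsoid in $W$ containing balls of radius tending to $\infty$. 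Taking any full-rank lattice $\Gamma$ that contains a full-rank lattice of $W$, this forces $\#(\Gamma \cap A^{j} B_{r}(0)) \to \infty$, which is incompatible with the uniform bound $C$ that \eqref{eq:LCE} imposes for $j < 0$.

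Case (c) is the heart of the matter. An order-$k$ Jordan block at a modulus-one eigenvalue makes $\|A^{j}|_{V_{1}}\| \asymp |j|^{k-1}$, so on rerunning the packing argument of part (i) the thin directions of $A^{j} B_{r}(0)$ now cost a genuine polynomial factor, and the best one gets is $\#(\Gamma \cap A^{j} B_{r}(0)) \lesssim |j|^{k-1} |\det A|^{j}$. The task is to show this polynomial growth is actually attained by some full-rank lattice. The idea is to choose $\Gamma$ whose projection onto $V_{1}$ is dense and satisfies a Diophantine condition relative to the direction in which the block is stretched, so that along a suitable subsequence $j \to \infty$ the long, shallow ellipsoid $A^{j}(B_{r}(0) \cap V_{1})$ sweeps through $\gtrsim |j|^{k-1}$ lattice directions, each of which the expanding factor $A_{2}^{j}$ then replicates $\asymp |\det A|^{j}$ times; hence $\#(\Gamma \cap A^{j} B_{r}(0)) \gtrsim |j|^{k-1} |\det A|^{j}$, which exceeds $C |\det A|^{j}$ for every fixed $C$ once $j$ is large enough, contradicting \eqref{eq:LCE}. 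This construction is the main obstacle and the only genuinely delicate point: for a fixed lattice a thin set of bounded volume contains only boundedly many lattice points, so one must exploit equidistribution of $\Gamma$ transverse to the shear rather than merely its covolume. By contrast, part (i) and case (a) are routine once the normal form and the packing estimate are in place.
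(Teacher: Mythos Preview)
The paper does not contain a proof of this theorem. Theorem~\ref{thm:LCE_bownik} is quoted verbatim from \cite{bownik2017wavelets} (it is introduced as ``The next result is \cite[Theorem~2.4]{bownik2017wavelets}'') and is used as a black box in the proof of Corollary~\ref{cor:LCE_UCE}. There is therefore nothing in the present paper to compare your proposal against.

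As a standalone sketch of the Bownik--Lemvig argument, your outline for part~(i) and for case~(a) of part~(ii) is essentially the standard one and is correct: the real Jordan decomposition together with a packing/volume bound handles (i), and an eigenvalue of modulus $<1$ immediately produces unbounded counts as $j \to -\infty$ for any lattice meeting the contracting subspace in a full-rank sublattice. For case~(c), however, your description remains at the level of an intuition (``choose $\Gamma$ whose projection onto $V_1$ is dense and satisfies a Diophantine condition \dots'') rather than a proof. The actual construction in \cite{bownik2017wavelets} is considerably more concrete: one works in the two-dimensional invariant subspace corresponding to the offending Jordan block, chooses a lattice with an irrational slope adapted to the rotation angle, and then uses an explicit Diophantine/equidistribution argument (three-distance type, or Weyl-type when the eigenvalue is a nontrivial root of unity versus not) to produce, along a subsequence of $j$, at least $c\,|j|$ lattice points inside the sheared ellipse $A^{j}(B_r(0)\cap V_1)$. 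Your sketch correctly identifies this as the crux, but the phrase ``each of which the expanding factor $A_2^{j}$ then replicates $\asymp |\det A|^{j}$ times'' is not quite right as stated: the lattice need not split as a product over $V_1 \oplus V_2$, so the replication step also requires care. If you intend to supply a full proof here you should either reproduce the construction from \cite{bownik2017wavelets} or simply cite the reference, as the paper does.
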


The statements in the previous result hold for all full-rank lattices $\Gamma \subseteq \mathbb{R}^d$. In case an arbitrary matrix $A \in \mathrm{GL}(d, \mathbb{R})$ with $|\det A|>1$ is fixed, there might still exist a full-rank lattice $\Gamma \subseteq \mathbb{R}^d$ such that $(A^j \Gamma)_{j \in \mathbb{Z}}$ satisfies the lattice counting estimate \eqref{eq:LCE}. For example, the system $(A^j \mathbb{Z}^3)_{j \in \mathbb{Z}}$ in $\mathbb{R}^3$, where
\begin{align} \label{eq:guo_example}
 A
=
\begin{pmatrix}
a & 0 & 0 \\
0 & 1 & 1 \\
0 & 0 & 1
\end{pmatrix} 
\end{align}
for $a \in \mathbb{R}$ with $|a| > 1$, satisfies the lattice counting estimate \cite[Example 4]{guo2006some}, but $A$ is not expansive on a subspace. This example should be considered in the light of  \cite[Corollary 4.7]{bownik2017wavelets}, which asserts that given any $A \in \mathrm{GL}(d, \mathbb{R})$ with $|\det A| > 1$, then the pair $(A, \Gamma)$ satisfies the lattice counting estimate \eqref{eq:LCE} for almost all unimodular lattices $\Gamma \subseteq \mathbb{R}^d$. Moreover, given any full-rank lattice $\Gamma \subseteq \mathbb{R}^d$, then the pair $(A, \Gamma)$ satisfies the lattice counting estimate \eqref{eq:LCE} for almost all $A \in \mathrm{GL}(d, \mathbb{R})$ with $|\det A|>1$, cf. \cite{bownik2017wavelets}. 

The following result is a simple consequence of Theorem \ref{thm:LCE_bownik}.  

\begin{corollary} \label{cor:LCE_UCE}
Let $\Gamma \subseteq \mathbb{R}^d$ be a closed, co-compact subgroup and let $A \in \mathrm{GL}(d, \mathbb{R})$ be such that $|\det A| > 1$ and that $A$ is expanding on a subspace. Then the family $( \Gamma_j)_{j \in \mathbb{Z}}$ with $\Gamma_j = A^j \Gamma$ satisfies the uniform counting estimate \eqref{eq:counting}.
\end{corollary}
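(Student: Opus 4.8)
The plan is to deduce the uniform counting estimate \eqref{eq:counting} for $(\Gamma_j)_{j\in\mathbb{Z}}$ with $\Gamma_j=A^j\Gamma$ from the lattice counting estimate \eqref{eq:LCE} applied to the transpose $A^T$, via Theorem \ref{thm:LCE_bownik}(i). By Lemma \ref{lem:neighbourhood} it suffices to verify \eqref{eq:counting} for the single compact neighbourhood $K:=\overline{B_1(0)}$ of $0\in\widehat{\mathbb{R}}^d$; that is, to bound $\sup_{\omega\in\widehat{\mathbb{R}}^d}\#\big(\Gamma_j^{\perp}\cap(K-\omega)\big)$ by a constant independent of $j$ times $1+\covol(\Gamma_j)$. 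Writing $\Gamma=C(\mathbb{Z}^n\times\mathbb{R}^{d-n})$ with $C\in\mathrm{GL}(d,\mathbb{R})$ and $n\in\{0,\dots,d\}$, one has $\Gamma_j=(A^jC)(\mathbb{Z}^n\times\mathbb{R}^{d-n})$, hence $\Gamma_j^{\perp}=(A^jC)^{\sharp}(\mathbb{Z}^n\times\{0\}^{d-n})=(A^{\sharp})^jC^{\sharp}(\mathbb{Z}^n\times\{0\}^{d-n})$ with $A^{\sharp}:=(A^T)^{-1}$, while $\covol(\Gamma_j)=|\det(A^jC)|=|\det A|^j\covol(\Gamma)$ by the normalisation conventions of this section. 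Since $\mathbb{Z}^n\times\{0\}^{d-n}\subseteq\mathbb{Z}^d$, the set $\Gamma_j^{\perp}$ is contained in the full-rank lattice $(A^{\sharp})^j\Lambda$ with $\Lambda:=C^{\sharp}\mathbb{Z}^d$, so that $\#\big(\Gamma_j^{\perp}\cap S\big)\leq\#\big((A^{\sharp})^j\Lambda\cap S\big)$ for every $S\subseteq\widehat{\mathbb{R}}^d$.

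For the core estimate, the bijection $x\mapsto(A^{\sharp})^jx$ of $\widehat{\mathbb{R}}^d$ carries $\Lambda$ onto $(A^{\sharp})^j\Lambda$ and $(A^T)^j(K-\omega)$ onto $K-\omega$, so $\#\big((A^{\sharp})^j\Lambda\cap(K-\omega)\big)=\#\big(\Lambda\cap(z+(A^T)^jK)\big)$ with $z:=-(A^T)^j\omega$, which ranges over all of $\widehat{\mathbb{R}}^d$ as $\omega$ does. If the set $\Lambda\cap(z+(A^T)^jK)$ is non-empty, translating one of its points to the origin embeds the entire intersection into $\Lambda\cap\big((A^T)^jK-(A^T)^jK\big)=\Lambda\cap(A^T)^j\overline{B_2(0)}$, the last equality using that $K=\overline{B_1(0)}$ is convex and symmetric. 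Hence, for every $j\in\mathbb{Z}$,
\[
\sup_{\omega\in\widehat{\mathbb{R}}^d}\#\big(\Gamma_j^{\perp}\cap(K-\omega)\big)\leq\#\big(\Lambda\cap(A^T)^j\overline{B_2(0)}\big)\leq\#\big(\Lambda\cap(A^T)^jB_3(0)\big).
\]
As $A^T$ is similar to $A$, it is expanding on a subspace with $|\det A^T|=|\det A|>1$, and $\Lambda$ is a full-rank lattice, so Theorem \ref{thm:LCE_bownik}(i) yields a constant $C_0>0$ independent of $j$ with $\#\big(\Lambda\cap(A^T)^jB_3(0)\big)\leq C_0\max\{1,|\det A|^j\}$ for all $j\in\mathbb{Z}$. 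Since $\max\{1,|\det A|^j\}\leq\max\{1,|\det C|^{-1}\}\big(1+|\det A|^j|\det C|\big)=\max\{1,|\det C|^{-1}\}\big(1+\covol(\Gamma_j)\big)$, this proves \eqref{eq:counting} for $K=\overline{B_1(0)}$, and Lemma \ref{lem:neighbourhood} promotes it to all compact sets.

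The step requiring the most care is the passage from the origin-centred, $A^j$-dilated balls of the lattice counting estimate \eqref{eq:LCE} to the \emph{translated} dilated balls forced by the supremum over $\omega$ in \eqref{eq:counting}; this is dealt with by replacing $\Lambda\cap(z+(A^T)^jK)$ with $\Lambda\cap\big((A^T)^jK-(A^T)^jK\big)$ at the cost of doubling the radius, which is precisely what makes Theorem \ref{thm:LCE_bownik}(i) applicable uniformly in $z$. The remaining ingredients --- identifying $\Gamma_j^{\perp}$ and $\covol(\Gamma_j)$, embedding the possibly lower-rank lattice $\Gamma^{\perp}$ into $C^{\sharp}\mathbb{Z}^d$, and observing that $A^T$ is expanding on a subspace whenever $A$ is --- are routine.
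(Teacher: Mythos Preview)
Your proof is correct and follows essentially the same approach as the paper: embed $\Gamma_j^{\perp}$ into the full-rank lattice $(A^{\sharp})^j C^{\sharp}\mathbb{Z}^d$ (the paper's $\Lambda_j^{\perp}$) and apply Theorem~\ref{thm:LCE_bownik}(i) to the pair $(A^T, C^{\sharp}\mathbb{Z}^d)$. Your argument is in fact more explicit than the paper's about the difference-set step $\Lambda\cap(z+(A^T)^jK)\hookrightarrow\Lambda\cap(A^T)^j(K-K)$ needed to pass from the centred balls in \eqref{eq:LCE} to the translated ones demanded by the supremum in \eqref{eq:counting}.
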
 
\begin{proof}
 Write $\Gamma = C (\mathbb{Z}^n \times \mathbb{R}^{d-n} )$ for some $n \in \{0,1,...,d\}$ and $C \in \mathrm{GL}(d, \mathbb{R})$. Define the full-rank lattice $\Lambda = C \mathbb{Z}^d$.  Take $A \in \mathrm{GL} (d, \mathbb{R})$  such that it is expanding on a subspace. Then $A^T$ is also expanding on a subspace. Define $\Gamma_j = A^j \Gamma$ and $\Lambda_j = A^j \Lambda$ for $j \in \mathbb{Z}$. Since $\Lambda \subseteq \Gamma$, it follows that $\Lambda_j \subseteq \Gamma_j$ for all $j \in \mathbb{Z}$, which in turn implies that $\Gamma_j^{\perp} \subseteq \Lambda_j^{\perp}$ for all $j \in \mathbb{Z}$. Using Theorem \ref{thm:LCE_bownik}(i) for the pair $(A^T, \Lambda^{\perp})$, there exists a constant $C > 0$ such that 
\begin{align*}
\# \bigg( \Lambda_j^{\perp} \cap B_r (0) \bigg) &=  \# \bigg( (A^T)^{-j} \Lambda^{\perp} \cap B_r (0) \bigg) =  \# \bigg( \Lambda^{\perp} \cap (A^T)^{j} B_r (0) \bigg) \leq C\bigg(1 + |\det A|^{j} \bigg) 
\end{align*}
for all $j \in \mathbb{Z}$. Hence, fixing a compact set $K \subseteq \mathbb{R}^d$ and choosing $r>0$ such that $K \subseteq B_r (0)$, it follows
\[
\# \bigg( \Gamma_j^{\perp} \cap K \bigg) \leq \# \bigg( \Lambda_j^{\perp} \cap B_r (0) \bigg) \leq C (1 + \covol(\Gamma_j)),
\]
as required. 
\end{proof}

A counting estimate for possibly non-discrete, but closed, co-compact subgroups is also contained in \cite{barbieri2017calderon}, where it is called \emph{Property X}. 
In fact, the results in \cite{barbieri2017calderon} hold for expansive automorphisms on general locally compact Abelian groups. See \cite[Appendix A]{barbieri2017calderon} for the relation between Property X and the lattice counting estimate
for expansive matrices as considered here.

\begin{remark} \label{rem:construction_wavelet}
Let $a \in \; ]0,1[$ be such that $\frac{a^2}{a-1} \in \mathbb{R} \setminus \mathbb{Q}$. Define
\[
B = 
\begin{pmatrix}
2 & 0 & 0 \\
0 & 1 + a & 1 \\
0 & -a^2 & 1-a 
\end{pmatrix}
\in \mathrm{GL}(3, \mathbb{R})
\]
and set $A := B^T$. By \cite[Lemma 5.4]{guo2006some}, the pair $(B, \mathbb{Z})$ fails the lattice counting estimate \eqref{eq:LCE}. Based on the pair $(B, \mathbb{Z})$, one can construct a function $\psi \in L^2 (\mathbb{R}^3)$ such that $\{T_{\gamma} D_{A^j} \psi \}_{j \in \mathbb{Z}, \gamma \in A^{-j} \mathbb{Z}}$, where $D_{A^j} \psi := |\det A|^{j/2} \psi (A^j \cdot)$, fails the local integrability condition, but satisfies
\begin{align} \label{eq:Calderon_wavelets}
\sum_{j \in \mathbb{Z}} \frac{1}{|\det A|^j} |\widehat{D_{A^j} \psi}(\omega)|^2 = \sum_{j \in \mathbb{Z}} |\psi(B^{-j} \omega)|^2 = |\psi (\omega)|^2 \leq 1
\end{align}
for $\mu_{\widehat{\mathbb{R}}^3}$-a.e. $\omega \in \widehat{\mathbb{R}}^3$ by \cite[Theorem 5.1]{guo2006some}. Thus $\{T_{\gamma} D_{A^j} \psi \}_{j \in \mathbb{Z}, \gamma \in A^{-j} \mathbb{Z}}$ satisfies the Calder\'on integrability condition \eqref{eq:CalderonLI} and the strictly temperateness condition \eqref{eq:norm-bounded}. 

The construction of this example works in fact for any pair $(B, \Gamma)$, where $B \in \mathrm{GL}(d, \mathbb{R})$ with $|\det B|>1$ and $\Gamma \subseteq \mathbb{R}^d$ is a full-rank lattice, that fails the lattice counting estimate \eqref{eq:LCE}. See the proof of \cite[Theorem 3.8]{bownik2017wavelets}.
\end{remark}

The following fact is used implicitly in wavelet theory.

\begin{lemma} \label{lem:temperate_wavelet}
Let $A \in \mathrm{GL}(d, \mathbb{R})$ be expanding on a subspace and let $P$ be a finite set.  Then, for any family $\{g_p\}_{p \in P}$ in $L^2 (\mathbb{R}^d)$, the dilations $ \{|\det A|^{j/2}g_{p} ( A^j \cdot)\}_{j \in \mathbb{Z}, p \in P}$ satisfy the temperateness condition \eqref{eq:norm-bounded}. 
\end{lemma}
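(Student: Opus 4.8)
The plan is to turn the local integrability of the Calder\'on-type function $\omega\mapsto\sum_{j\in\mathbb{Z}}\sum_{p\in P}\bigl|\widehat{D_{A^{j}}g_{p}}(\omega)\bigr|^{2}$ into a combinatorial statement — a uniform bound on how often the $A^{T}$-orbit of a point can visit a fixed compact set — in the same spirit as the lattice counting estimate of Theorem \ref{thm:LCE_bownik}, but for the orbit of a single point under a single matrix rather than for a lattice. The relevant blind spot is $E:=E_{A^{T}}$, the (real) $A^{T}$-invariant subspace spanned by the generalised eigenspaces of $A^{T}$ for eigenvalues of modulus one; equivalently $E=V_{A}^{\perp}$, where $V_{A}\subseteq\mathbb{R}^{d}$ is the largest $A$-invariant subspace on which $A$ is expanding. (If $A$ itself is expanding, then $E=\{0\}$.) Since $A$ is expanding on a subspace we have $V_{A}\neq\{0\}$, so $E$ is a proper linear subspace of $\widehat{\mathbb{R}}^{d}$ and hence $E\in\mathcal{E}$.

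Fix a compact set $K\subseteq\widehat{\mathbb{R}}^{d}\setminus E$ and write $B:=A^{T}$. Using $\widehat{D_{A^{j}}g_{p}}(\omega)=|\det A|^{-j/2}\widehat{g_{p}}(B^{-j}\omega)$, Tonelli's theorem and the substitution $\omega=B^{j}u$ in the $j$-th term (its Jacobian $|\det A|^{j}$ cancels the dilation constant) give
\[
\int_{K}\sum_{j\in\mathbb{Z}}\sum_{p\in P}\bigl|\widehat{D_{A^{j}}g_{p}}(\omega)\bigr|^{2}\,d\omega=\sum_{p\in P}\int_{\widehat{\mathbb{R}}^{d}}N_{K}(u)\,|\widehat{g_{p}}(u)|^{2}\,du,
\]
where $N_{K}(u):=\#\{\,j\in\mathbb{Z}:B^{j}u\in K\,\}$. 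Hence it suffices to show that $N_{K}\in L^{\infty}(\widehat{\mathbb{R}}^{d})$: the right-hand side is then bounded by $\|N_{K}\|_{\infty}\sum_{p\in P}\|g_{p}\|_{2}^{2}<\infty$ by Plancherel's theorem and finiteness of $P$, which is exactly \eqref{eq:norm-bounded} with respect to $E$.

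To bound $N_{K}$, split $\widehat{\mathbb{R}}^{d}=V_{e}\oplus E$ into $B$-invariant subspaces, where $V_{e}$ carries the eigenvalues of modulus $>1$; then $V_{e}\neq\{0\}$ and $B|_{V_{e}}$ is expanding. Let $P_{e},P_{E}$ be the corresponding projections, and set $\delta:=\operatorname{dist}(K,E)>0$ (positive since $K$ is compact and disjoint from the closed set $E$) and $R:=\sup_{\omega\in K}\|\omega\|$. If $B^{j}u\in K$, then, since $B^{j}P_{E}u\in E$,
\[
\delta\le\operatorname{dist}(B^{j}u,E)\le\|B^{j}u-B^{j}P_{E}u\|=\bigl\|(B|_{V_{e}})^{j}(P_{e}u)\bigr\|\le\|P_{e}\|\,R,
\]
so $(B|_{V_{e}})^{j}(P_{e}u)$ lies in the fixed annulus $\{v:\delta\le\|v\|\le\|P_{e}\|R\}$. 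It thus remains to prove the elementary lemma: if $M\in\mathrm{GL}(V)$ is expanding, then for every $0<r_{1}\le r_{2}$ there is an integer $N$ such that $\#\{j\in\mathbb{Z}:\|M^{j}v\|\in[r_{1},r_{2}]\}\le N$ for \emph{all} $v\in V$. This follows from the standard estimate $\|M^{n}v\|\ge C^{-1}a^{n}\|v\|$ for $n\ge0$ (for suitable $C\ge1$, $a>1$, obtained from $\rho(M^{-1})<1$): if $j_{1}\le j_{2}$ both lie in the set above, then $r_{2}\ge\|M^{j_{2}-j_{1}}(M^{j_{1}}v)\|\ge C^{-1}a^{j_{2}-j_{1}}r_{1}$, so $j_{2}-j_{1}\le\log_{a}(Cr_{2}/r_{1})$, which bounds the diameter — hence the cardinality — of the set independently of $v$. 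Taking $M=B|_{V_{e}}$ and $v=P_{e}u$ gives $N_{K}(u)\le N$ for every $u$, completing the proof.

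The crux is this uniform annulus estimate, specifically its uniformity over \emph{all} $u$: when the $V_{e}$-component of $u$ is very small, the only mechanism preventing infinitely many visits is the genuine expansion of $B$ on $V_{e}$, which pushes $\|(B|_{V_{e}})^{j}(P_{e}u)\|$ through the band $[\delta,\|P_{e}\|R]$ only boundedly often, with a bound depending on $K$ (via $\delta$ and $R$) and on $B$, but not on $u$. The other point requiring care is the subspace bookkeeping — that the subspace one must remove is the neutral subspace of $A^{T}$ rather than of $A$, and that it is proper so that $\operatorname{dist}(K,E)>0$; the Jordan-block clause in the definition of ``expanding on a subspace'' turns out not to be needed in this argument.
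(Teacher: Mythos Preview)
Your argument is correct. The paper does not give its own proof here but simply refers to \cite[Proposition~5.12]{hernandez2002unified}; the approach there is precisely the orbit--counting one you carry out: reduce via the substitution $\omega=B^{j}u$ to a uniform bound on the visit number $N_{K}(u)=\#\{j:B^{j}u\in K\}$, and obtain that bound from the expansion of $B=A^{T}$ on the complement of its neutral subspace. Your write-up is a clean, self-contained version of this standard argument, and your identification of the blind spot $E$ as the neutral subspace of $A^{T}$ (a proper linear subspace, hence in $\mathcal{E}$) is exactly right. Two small remarks: first, your observation that the Jordan-block clause~(c) is not used is correct for temperateness, though that clause \emph{is} used elsewhere (e.g.\ in the lattice counting estimate of Theorem~\ref{thm:LCE_bownik}); second, when $A$ is genuinely expanding your $E$ collapses to $\{0\}$, matching the usual wavelet convention.
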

\begin{proof}
Confer the proof of \cite[Proposition 5.12]{hernandez2002unified}.
\end{proof}

The next result provides a complete characterisation of the local integrability condition \eqref{eq:LIC} for regular wavelet systems whose dilation matrix is expanding on a subspace. The result recovers \cite[Theorem 3.8]{bownik2017wavelets}.  Here, the assumption that $|\det A|>1$ is not a real restriction since for the case $|\det A|=1$ wavelet frames do not exist \cite{laugesen2002characterization,larson2006explicit}.

\begin{proposition}
Let $A \in \mathrm{GL}(d, \mathbb{R})$ be expanding on a subspace with $|\det A|>1$. Let $\Gamma \subseteq \mathbb{R}^d$ be a closed, co-compact subgroup and let $P$ be a finite set. Then the following assertions are equivalent:
\begin{enumerate}[(i)]
\item The system $\cup_{j \in \mathbb{Z}} \{T_{\gamma} D_{A^j} g_p \}_{\gamma \in A^{-j} \Gamma, p \in P}$ satisfies the local integrability condition \eqref{eq:LIC}.
\item The system $\cup_{j \in \mathbb{Z}} \{T_{\gamma} D_{A^j} g_p \}_{ \gamma \in A^{-j}  \Gamma, p \in P}$ satisfies the Calder\'on integrability condition \eqref{eq:CalderonLI}. 
\end{enumerate}
\end{proposition}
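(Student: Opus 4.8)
The plan is to deduce this as a direct corollary of the results already assembled in the paper, specialising the general machinery to the wavelet setting. The implication (i) $\Rightarrow$ (ii) is immediate from Lemma \ref{lem:LIC->CalderonLI}, which holds for any generalised translation-invariant system and asserts that the local integrability condition always implies the Calder\'on integrability condition. So the content of the proposition is the reverse implication (ii) $\Rightarrow$ (i): one must show that for wavelet systems with an expanding-on-a-subspace dilation matrix, the Calder\'on integrability condition alone forces the local integrability condition.

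The strategy for (ii) $\Rightarrow$ (i) is to verify the three hypotheses of Proposition \ref{prop:LIC_sufficient} and then invoke it. First, the temperateness condition \eqref{eq:norm-bounded} holds for free: since $A$ is expanding on a subspace and $P$ is finite, Lemma \ref{lem:temperate_wavelet} applied to the family $\{g_p\}_{p \in P}$ gives that the dilations $\{|\det A|^{j/2} g_p(A^j \cdot)\}_{j \in \mathbb{Z}, p \in P} = \{D_{A^j} g_p\}_{j \in \mathbb{Z}, p \in P}$ are strictly temperate. Second, the uniform counting estimate \eqref{eq:counting} for the translation subgroups $\Gamma_j := A^{-j}\Gamma$ follows from Corollary \ref{cor:LCE_UCE}: that corollary is stated for $\Gamma_j = A^j \Gamma$ with $A$ expanding on a subspace, and since $A$ expanding on a subspace is equivalent to $A^{-1}$ being\ldots well, here one should be slightly careful — the cleaner route is to observe that Corollary \ref{cor:LCE_UCE} is proved by passing to $(A^T, \Lambda^\perp)$ and the annihilators, and the family $(A^{-j}\Gamma)^\perp = (A^T)^j \Gamma^\perp$ is governed by the lattice counting estimate for the pair $(A^T, \Gamma^\perp)$, which holds since $A^T$ is expanding on a subspace whenever $A$ is. Third, the Calder\'on integrability condition \eqref{eq:CalderonLI} is precisely hypothesis (ii). Having checked all three, Proposition \ref{prop:LIC_sufficient} yields the local integrability condition \eqref{eq:LIC}, completing the equivalence.

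The one place requiring genuine (if minor) attention is the direction of the dilation in the uniform counting estimate: the wavelet system is indexed by $\gamma \in A^{-j}\Gamma$, so the relevant subgroups are $\Gamma_j = A^{-j}\Gamma$ rather than $A^j\Gamma$, and one must confirm that Corollary \ref{cor:LCE_UCE} (or rather its proof via Theorem \ref{thm:LCE_bownik}) applies. The resolution is that the annihilators satisfy $\Gamma_j^\perp = (A^{-j}\Gamma)^\perp = (A^T)^j \Gamma^\perp$, and since $A$ being expanding on a subspace is equivalent to $A^T$ being expanding on a subspace (eigenvalues and Jordan block structure are transpose-invariant) with $|\det A^T| = |\det A| > 1$, Theorem \ref{thm:LCE_bownik}(i) applies to the pair $(A^T, \Lambda^\perp)$ for the full-rank lattice $\Lambda \subseteq \Gamma$, exactly as in the proof of Corollary \ref{cor:LCE_UCE}, giving $\#(\Gamma_j^\perp \cap K) \leq C(1 + \covol(\Gamma_j))$ for all $j \in \mathbb{Z}$. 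I would write the proof to simply cite Corollary \ref{cor:LCE_UCE} after noting this transpose-symmetry, keeping the argument to a few lines.

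In short, the proof reads: \emph{(i) $\Rightarrow$ (ii) is Lemma \ref{lem:LIC->CalderonLI}. For (ii) $\Rightarrow$ (i), the temperateness condition holds by Lemma \ref{lem:temperate_wavelet} since $A$ is expanding on a subspace and $P$ is finite; the uniform counting estimate holds by Corollary \ref{cor:LCE_UCE} applied to the (transpose-)expanding matrix governing the annihilators; and the Calder\'on integrability condition is the hypothesis. Proposition \ref{prop:LIC_sufficient} then gives \eqref{eq:LIC}.} I do not anticipate a real obstacle — everything has been set up in advance — so the proof is essentially a bookkeeping exercise in assembling the prior results, with the transpose/inverse juggling in the counting estimate being the only subtlety worth a sentence.
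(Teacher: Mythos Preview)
Your proposal is correct and follows exactly the paper's approach: the paper's proof cites Lemma \ref{lem:LIC->CalderonLI} for (i) $\Rightarrow$ (ii) and combines Proposition \ref{prop:LIC_sufficient}, Corollary \ref{cor:LCE_UCE}, and Lemma \ref{lem:temperate_wavelet} for the converse. The indexing subtlety you flag is in fact a non-issue, since the family $(A^{-j}\Gamma)_{j\in\mathbb{Z}}$ is merely a reindexing of $(A^{j}\Gamma)_{j\in\mathbb{Z}}$ (both $j$ and $-j$ range over all of $\mathbb{Z}$), so Corollary \ref{cor:LCE_UCE} applies directly without any transpose juggling.
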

\begin{proof}
The fact that (i) implies (ii) is simply Lemma \ref{lem:LIC->CalderonLI}. The reverse implication follows by combining Proposition \ref{prop:LIC_sufficient}, Corollary \ref{cor:LCE_UCE} and Lemma \ref{lem:temperate_wavelet}.
\end{proof}

\subsection*{Acknowledgements} The author gratefully acknowledges support from the Austrian Science Fund (FWF):P29462-N35. The author thanks Jos\'e Luis Romero for useful discussions and for his help with several of the examples. Thanks also goes to Peter Kuleff and Jakob Lemvig for reading the manuscript and providing helpful comments.

\end{document}